\DeclareMathOperator{\Int}{int}
\DeclareMathOperator{\sgn}{sgn}
\newcommand{\R}{\mathbb{R}}
\newcommand{\Z}{\mathbb{Z}}
\newcommand{\polar}{\circ}
\newcommand{\const}{\mathrm{const}}
\newcommand{\id}{\mathbbm{1}}
\newcommand{\argmax}{\operatornamewithlimits{arg\,max}}
\newcommand{\Hb}[1]{\mathbb{H}_{#1}}
\newtheorem{proposition}{Proposition}
\newtheorem{theorem}{Theorem}
\newtheorem{corollary}{Corollary}
\newtheorem{remark}{Remark}
\newtheorem{definition}{Definition}
\newtheorem*{massumption}{Main Assumption}
\title{Explicit formulae for geodesics in left--invariant sub--Finsler problems on Heisenberg groups via convex trigonometry.}
\date{}
\author{L.V. Lokutsievskiy}
\begin{document}

\maketitle

\begin{abstract}
	In the present paper, we obtain explicit formulae for geodesics in some left--invariant sub--Finsler problems on Heisenberg groups $\Hb{2n+1}$. Our main assumption is the following: the compact convex set of unit velocities at identity admits a generalization of spherical coordinates. This includes convex hulls and sums of coordinate 2--dimensional sets, all left--invariant sub--Riemannian structures on $\Hb{2n+1}$, and unit balls in $L_p$--metric for $1\le p\le\infty$. In the last case, extremals are obtained in terms of incomplete Euler integral of the first kind.
\end{abstract}

\section*{Introduction}

The left--invariant sub--Finsler problem on the Heisenberg group $\Hb{3}$ of smallest dimension $3$ was studied for the first time by Herbert Busemann in~\cite{Busemann} (1947). Busemann was studying Dido's problem on the plane~$\R^2$ equipped by an arbitrary Finsler metric. All closed geodesics on $\Hb{3}$ were found in this paper, and this allowed Busemann to find the exact isoperimetric inequalities on Finsler planes. It is interesting that this problem was solved by Brunn-Minkowski theory and without using the Pontryagin maximum principle (or PMP for short), which was not yet discovered. It is worth to mention that full description of sub--Finsler geodesics on $\Hb{3}$ is not given in~\cite{Busemann}, since the author have been interested in isoperimetric inequalities and does not formulate the problem in term of sub--Finsler geometry.

The first full description of sub--Finsler geodesics on $\Hb{3}$ was obtained by Valerii N. Berestovskii with the help of Pontryagin's maximum principle in~\cite{BerestovskiiHeisenberg} (1994). Among other thing, he has found non-closed geodesics, which are surprisingly not necessarily straight lines if a compact convex set of unit velocities is not strictly convex. The work~\cite{BerestovskiiHeisenberg} naturally continued the work~\cite{VershikGershkovich}, where, for example, sub--Riemannian wave geodesic front on $\Hb{3}$ was constructed. sub--Riemannian geodesics on $\Hb{3}$ was also studied in~\cite{BrocketDai}. Also, left--invariant sub--Riemannian problems on Heisenberg groups $\Hb{2n+1}$ of higher dimensions were studies in~\cite{Gaveau1977}.

The definitions of sub--Riemannian and sub--Finsler geometries are very close to the definition of Riemannian geometry. The main differences are the following. In Riemannian geometry on a manifold~$M$, we assume that in any tangent space $T_qM$, there is given a centered at the origin ellipsoid $U(q)\subset T_qM$ of full dimension that represents the set of unit velocities at $q$. The assumption of $U(q)$ smooth dependence on $q$ allows us to compute length of curves, and the result is $M$ becoming a metric space with a series of very well-known properties. In sub--Riemannian geometry, we allow the ellipsoid $U(q)$ to have dimension that is smaller than $\dim M$ (it is still needs to be centered at the origin). In other words, $U(q)$ is an ellipsoid lying in a subspace $\Delta(q)\subset T_q M$, which depends smoothly on $q$ (for the exact definition, we refer the reader to the great modern book~\cite{AgrachevBarilariBoscain}). Using $U(q)$ we are not able to measure lengths of arbitrary curves on $M$ (for exmample, if $\dim\Delta(q) < \dim M$), but we are able to measure lengths of so called horizontal curves: a Lipschitz continuous curve on $M$ is called horizontal if it is a.e.\ tangent to~$\Delta(q)$. Hence if distribution $\Delta$ is nonholonomic (i.e.\ $(\mathrm{Lie}\,\Delta)(q)=T_qM$ for all $q\in M$), then any two points on $M$ can be connected by a horizontal curve. Again, using $U(q)$ as a set of unit velocities at $q$, we are able to introduce sub--Riemannian distance on $M$ --- infimum of lengths of horizontal curves joining the given two points. So, $M$ again becomes a geodesic metric space, but properties of sub--Riemannian geometry differ a lot from properties of classical Riemannian geometry. For example, Hausdorff dimension of a sub--Riemannian manifold usually differs from the topological one.

In sub--Finsler geometry sets $U(q)$ are not necessary ellipsoids, but instead can be arbitrary compact convex sets lying in $\Delta(q)$ and containing the origin in their (relative) interior. In recent years, interest to sub--Finsler geometry has greatly increased. This is related to the famous Gromov theorem on groups with polynomial growth~\cite{GromovPolynomialGrowth}, Berestovskii result on intrinsic left--invariant metrics on Lie groups (see \cite{Berestovskii1988}), and some other results including the latest results on hyperbolic geometry~\cite{FreemanLeDonne}. For example, recently it has been proved that sub--Finsler Carnot groups are the only locally compact, geodesic, isometrically homogeneous, and self-similar metric spaces (we refer the reader to~\cite{LeDonneFinsler} for details).

In the present paper, we work only with sub--Finsler structures on Heisenberg groups $\Hb{2n+1}$ (or $\mathbb{H}^n_{\mathbb{C}}$ in \cite{FreemanLeDonne} notations). Precisely, we are interested in obtaining explicit formulae for geodesics. As it was mentioned, full description of sub--Finsler geodesics on $\Hb{3}$ was obtained in~\cite{BerestovskiiHeisenberg}. For Heisenberg groups $\Hb{2n+1}$ of higher dimensions, only sub--Riemannian geodesics are known. There are some interesting results on $\Hb{2n+1}$ sub-Finser metric (e.g.\ Balogh and Calogero have proved that the only infinite geodesics on $\Hb{2n+1}$ for general strictly convex sub-Finsler metrics are straight lines \cite[Theorem 1.2]{BaloghCalogero}), but explicit formulae for geodesics are unknown. In the present paper, we use a newly developed machinery of convex trigonometry, which has been also used to obtain explicit formulae for geodesics in 5 sub--Finsler problems (including left-invriant problems on Engel and Cartan nilpotent Lie groups) for arbitrary two-dimensional compact convex sets $U$ (see \cite{CT1}). Moreover, recently, this machinery allows us (together with Yu.L.~Sachkov and A.A.~Ardentov) to obtain such formulae for all left--invariant sub--Finsler problems on $SL(2)$, $SU(2)$, $SE(2)$, and $SH(2)$ (see~\cite{CT2}). In the last paper, explicit formulae are also obtained for Finsler geodesics on the Lobachevsky plane, for the ball rolling problem on a Fisler plane, and for a series of yachts problems.

We are able to obtain the mentioned results in all these problems, since the set of unit velocities is a compact convex 2--dimensional set in all these problems, and machinery of convex trigonometry allows to work with these sets very conveniently. In the present paper, we are able to obtain explicit formulae for geodesics on $\Hb{2n+1}$ in terms of convex trigonometry when the $2n$--dimensional set of unit velocities admit a generalization of spherical coordinates. 

A brief introduction to convex trigonometry is given in Sec.~\ref{sec:ct}. The main result is Theorem~\ref{thm:main}, which is given in Sec.~\ref{sec:explicit_formulae}. Applications are given in Sec.~\ref{sec:lp}, \ref{sec:convex_hull_and_direct_product}, and~\ref{sec:subriemannain}.

\section{Main assumption on left--invariant sub--Finsler problems on Heisenberg groups}

The Heisenberg group $\Hb{2n+1}$ is defined as follows: $\Hb{2n+1}=\{q=(x,y,z):x\in\R^n,y\in\R^{n*},z\in\R\}$, and the group structure is given by the multiplication
\[
	q^1\cdot q^2 = (x^1,y^1,z^1)\cdot(x^2,y^2,z^2) = \left(x^1+x^2,y^1+y^2,z^1+z^2 + \frac12(\langle x^1,y^2\rangle - \langle x^2,y^1\rangle)\right).
\]
Identity $\id\in \Hb{2n+1}$ is $(0,0,0)$. Group $\Hb{2n+1}$ is a matrix Lie group:
\[
	\Hb{2n+1} = \left\{
			q=\left(\begin{array}{cccc}
				1 & x & \tilde z\\
				0 & \id_n & y\\
				0 & 0 & 1
			\end{array}\right)\in \mathrm{SL}(n+2,\R)
		\right\}
\]
where $\tilde z=z+\frac12\langle x,y\rangle$ and $\id_n$ denotes $n\times n$ identity matrix.

The classical left--invariant distribution $\Delta$ on $\Hb{2n+1}$, $\Delta(q)\subset T_q\Hb{2n+1}$, is given by the canonical 1-form $\alpha = dz - \frac12\sum_{i=1}^n (x_i\,dy_i-y_i\,dx_i)$ where $x=(x_1,\ldots,x_n)$ and $y=(y_1,\ldots,y_n)$, i.e\ 
\[
	\Delta(q)=\left\{(\dot x,\dot y,\dot z)\in T_q\Hb{2n+1}:\dot z=\frac12(\langle x,\dot y\rangle - \langle \dot x,y\rangle)\right\}=q\Delta(\id).
\] 

Any corresponding to $\Delta$ left--invariant sub--Finsler problem on $\Hb{2n+1}$ is given by a compact convex set $U\subset\Delta(\id)$ containing $0$ in its interior, $0\in\Int U$:
\begin{equation}
\label{eq:main_problem}
	\begin{array}{c}
		T\to\min;\\
		\dot q(t) \in q(t)U;\\
		q(0)=\id;\qquad q(T)=q_1.
	\end{array}
\end{equation}
Obviously, $(\mathrm{Lie}\,\Delta)(q)=T_q\Hb{2n+1}$, since $\alpha\wedge (d\alpha)^n$ does not vanish. Therefore, since $0\in\Int U$, for any $q_1\in \Hb{2n+1}$, there exists a trajectory connecting $\id$ and $q_1$ by the Rashevski-Chow theorem (see~\cite[Theorem~5.2]{AgrachevSachkov}), and, hence, there exists an optimal solution to problem~\eqref{eq:main_problem} by the Filippov theorem (see~\cite[Theorems~1 and~3 in Section~2.7]{Filippov}). This solution is not unique in general, but any solution to~\eqref{eq:main_problem} must obey Pontryagin maximum principle (see~\cite[Theorem 12.1]{AgrachevSachkov}).

Our purpose is to obtain explicit formulae for solutions to PMP. We are able to do this when the set of admissible controls~$U$ admits a generalization of spherical coordinates. Precisely, if $U$ satisfies the following

\begin{massumption}
	There exist compact convex sets $\Omega_i\subset\R^2$ with $0\in\Int\Omega_i$, $i=1,\ldots,n$, and a continuous convex positively homogeneous function $\mu:\R^n_+\to\R_+$ that is monotonously decreasing in each argument and strictly positive outside the origin such that
	\begin{equation}
	\label{eq:U_spherical}
		U = \{(\dot x,\dot y): \mu(\mu_{\Omega_1}(\dot x_1,\dot y_1),\ldots \mu_{\Omega_n}(\dot x_n,\dot y_n))\le 1\},
	\end{equation}
	where $\mu_\Omega$ denotes the Minkowski functional of a set $\Omega$.
\end{massumption}

Now, we try to explain why the main assumption is considered as an analogue of spherical coordinates on $\partial U$. For example, if we take the sub--Riemannian case for $n=2$, in which $U=\{\dot x_1^2+\dot y_1^2+\dot x_2^2+\dot y_2^2\le 1\}$, then $U$ satisfies the main assumption with $\Omega_i$ being the unit discs and $\mu(\lambda_1,\lambda_2)=(\lambda_1^2+\lambda_2^2)^{1/2}$. The optimal control always belongs to the sphere $\partial U$, which can be described by trigonometric functions $(\cos\theta_i,\sin\theta_i)\in\partial\Omega_i$: since $\lambda_1^2+\lambda_2^2=1$ on $\partial U$, we put $\lambda_1=\cos\zeta$ and $\lambda_2=\sin\zeta$ and obtain the following classical spherical coordinates on $\partial U$:
\[
	\dot x_1 = \cos\zeta \cos\theta_1;\quad \dot y_1=\cos\zeta\sin\theta_1;\quad
	\dot x_2 = \sin\zeta \cos\theta_2;\quad \dot y_2=\sin\zeta\sin\theta_2.
\]

We are able to repeat this procedure by functions of convex trigonometry for arbitrary $U$, if it satisfies the main assumption.

\begin{proposition}
	If a set $U$ satisfies the main assumption, then it is a compact convex set containing the origin in its interior.
\end{proposition}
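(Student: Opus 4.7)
The plan is to rewrite $U$ as the sublevel set $\{F\le 1\}$ of the composite function
\[
  F(\dot x,\dot y) := \mu\bigl(\mu_{\Omega_1}(\dot x_1,\dot y_1),\ldots,\mu_{\Omega_n}(\dot x_n,\dot y_n)\bigr)
\]
on $\R^{2n}$, and then deduce convexity, openness at the origin, and compactness from properties of $F$. Since each $\Omega_i\subset\R^2$ is compact convex with $0\in\Int\Omega_i$, its Minkowski functional $\mu_{\Omega_i}$ is a continuous, convex, positively homogeneous, nonnegative function on $\R^2$, vanishing only at the origin and satisfying a two-sided bound $c_i\|v\|\le\mu_{\Omega_i}(v)\le C_i\|v\|$ for some $0<c_i\le C_i$. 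So each inner slot of $F$ is a standard convex gauge.

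For convexity of $U$, I would combine these with the properties of $\mu$ via the classical composition rule: if $\mu$ is convex and non-decreasing in each argument and the $\mu_{\Omega_i}$ are convex, then $F$ is convex. (The hypothesis ``monotonously decreasing'' appears to be a typographical slip for non-decreasing: both the sub--Riemannian example $\mu(\lambda_1,\lambda_2)=\sqrt{\lambda_1^2+\lambda_2^2}$ and the role of $\mu$ as a ``norm-like'' aggregator force monotone non-decrease, and this is also the variant needed below for compactness.) Convexity of $U=\{F\le 1\}$ is then immediate.

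For the origin being interior, note that positive homogeneity of both $\mu$ and the $\mu_{\Omega_i}$ gives $F(0)=\mu(0,\ldots,0)=0<1$, and continuity of $F$ (which follows from continuity of $\mu$ and of each $\mu_{\Omega_i}$) produces an open neighbourhood of $0$ in $\R^{2n}$ inside $U$. For compactness, $U$ is closed as the preimage of $(-\infty,1]$ under the continuous $F$, so it suffices to bound $U$. Writing $e_i$ for the $i$-th standard basis vector of $\R^n$, strict positivity of $\mu$ outside the origin yields $\mu(e_i)>0$, and non-decrease together with positive homogeneity gives
\[
  F(\dot x,\dot y)\;\ge\;\mu\bigl(0,\ldots,0,\mu_{\Omega_i}(\dot x_i,\dot y_i),0,\ldots,0\bigr)\;=\;\mu(e_i)\,\mu_{\Omega_i}(\dot x_i,\dot y_i)\;\ge\;c_i\,\mu(e_i)\,\|(\dot x_i,\dot y_i)\|
\]
for every $i$; summing over $i$ forces $\|(\dot x,\dot y)\|$ to be bounded on $\{F\le 1\}$.

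The one step that needs care is the composition-convexity argument, which requires $\mu$ to be non-decreasing in each argument (and which, read together with the same hypothesis used in the coercivity estimate above, is what pins down the correct reading of the main assumption). Everything else is a routine unpacking of the listed properties of Minkowski functionals and of $\mu$.
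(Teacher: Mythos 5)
Your proof is correct and follows essentially the same route as the paper's: convexity of $U$ via the convex--monotone composition rule (the paper writes out exactly that inequality chain), the origin being interior from $F(0)=0<1$ plus continuity, and compactness from closedness together with a lower bound on $F$ obtained from positive homogeneity and strict positivity of $\mu$ and the gauge bounds $\mu_{\Omega_i}(v)\ge c_i\|v\|$. Your reading of ``monotonously decreasing'' as non-decreasing is also the reading the paper itself relies on, since its proof invokes monotonicity of $\mu$ in precisely the direction needed for the composition argument.
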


\begin{proof}
	Indeed, $0\in\Int\Omega$, since $\mu(\mu_{\Omega_1}(0,0),\ldots, \mu_{\Omega_n}(0,0))=0<1$ and functions $\mu$, $\mu_{\Omega_i}$ are continuous (see~\cite[Theorem 10.1]{Rockafellar}). Obviously, set $U$ is closed. Set $U$ is compact, since $\mu$ is positively homogeneous: there exists $c>0$ such that $\mu(\lambda_1,\ldots,\lambda_n)>1$ when $\sum_i\lambda_i\ge c$, $\lambda_i\ge 0$. It remains to show convexity: let $\alpha\in[0;1]$, then
	\begin{multline*}
		\mu(\mu_{\Omega_1}(\alpha \xi_1+(1-\alpha)\tilde \xi_1,\alpha \eta_1+(1-\alpha)\tilde \eta_1),\ldots,
			\mu_{\Omega_n}(\alpha \xi_n+(1-\alpha)\tilde \xi_n,\alpha \eta_n+(1-\alpha)\tilde \eta_n)) \le \\
		\le \mu(\alpha \mu_{\Omega_1}(\xi_1,\eta_1) + (1-\alpha)\mu_{\Omega_1}(\tilde \xi_1,\tilde \eta_1),\ldots,
			\alpha \mu_{\Omega_n}(\xi_n,\eta_n) + (1-\alpha)\mu_{\Omega_n}(\tilde \xi_n,\tilde \eta_n)) \le \\
		\le \alpha \mu(\mu_{\Omega_1}(\xi_1,\eta_1),\ldots, \mu_{\Omega_n}(\xi_n,\eta_n)) + 
			(1-\alpha) \mu(\mu_{\Omega_1}(\tilde \xi_1,\tilde \eta_1),\ldots, \mu_{\Omega_n}(\tilde \xi_n,\tilde \eta_n)).
	\end{multline*}
	Here the first inequality holds by convexity of $\mu_{\Omega_i}$ and monotonicity of $\mu$, and the second inequality holds by convexity of $\mu$.
\end{proof}

If $U$ satisfies the main assumption, then we are able to obtain explicit formulae for extremals in~\eqref{eq:main_problem} in terms of convex trigonometry (see Sec.~\ref{sec:explicit_formulae}). Obviously, there are plenty of sets that do not satisfy the main assumption. Nonetheless, the following important cases do satisfy this assumption.

\begin{enumerate}
	\item Balls in $L_p$ metric, $1\le p\le\infty$ (see Sec.~\ref{sec:lp})
	\item Convex hulls and sums of coordinate compact convex sets $\Omega_i\subset O\dot x_i\dot y_i\subset \Delta(\id)$, $0\in\Int\Omega_i$ (see Sec.~\ref{sec:convex_hull_and_direct_product} and Remark~\ref{rm:arbitrary_symplectic_planes}).
	\item Any sub--Riemannian problem on $\Hb{2n+1}$ can be reduced to the one satisfying the main assumption by an appropriate symplectic change of variables (see Sec.~\ref{sec:subriemannain}).
\end{enumerate}

\section{Introduction to convex trigonometry}
\label{sec:ct}

We start with a brief explanation of convex trigonometry which was introduced for the first time in~\cite{CT1}. This section presents shortly main definitions and formulae of convex trigonometry without any proofs, which can be found in~\cite{CT1}.

Let $\Omega\subset\R^2$ be a convex compact set and let $0\in\mathrm{int}\,\Omega$. The following definition of the functions $\cos_\Omega$ and $\sin_\Omega$ at first glance may cause a natural question ``why so?''. Nonetheless, exactly this particular definition appears to be very convenient in solving optimal control problems with 2--dimensional control in~$\Omega$. First, these functions were introduced in~\cite{CT1}, where geodesics on 5 sub--Finsler problems were found. In~\cite{CT2}, convex trigonometry were used to integrate a series of left--invariant sub--Finsler problems on all unimodular 3D Lie groups and some other problems (including Finsler geodesics on Lobachevsky plane).

Denote by $\mathbb{S}$ the area of set $\Omega$.

\begin{figure}[ht]
	\centering
	\begin{subfigure}[t]{0.45\textwidth}
		\includegraphics[width=\textwidth]{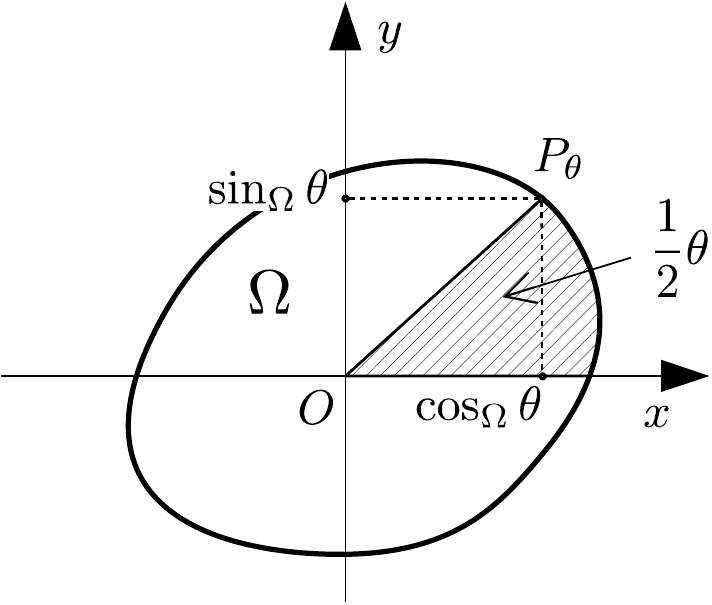}
		\subcaption{Definition of the generalized trigonometric functions $\cos_\Omega\theta$ and $\sin_\Omega\theta$ by set $\Omega$.}
		\label{fig:cos_sin_def}
	\end{subfigure}\hfill
	\begin{subfigure}[t]{0.45\textwidth}
		\includegraphics[width=\textwidth]{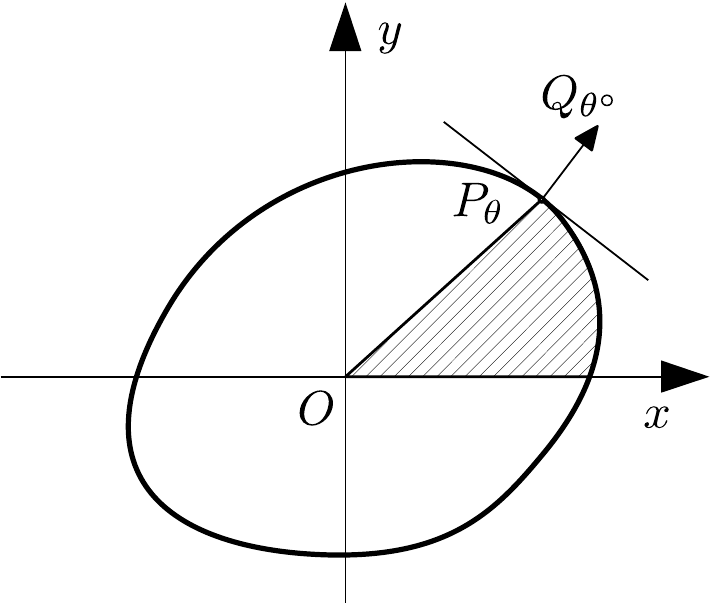}
		\subcaption{Correspondence $\theta\xleftrightarrow{\Omega\,}\theta^\polar$.}
		\label{fig:thetha_to_theta_polar_pic}
	\end{subfigure}
	\caption{Main convex trigonometry definitions.}
\end{figure}

\begin{definition}
	Let $\theta\in\R$ denote a generalized angle. If $0\le\theta<2\mathbb{S}$, then we choose a point $P_\theta$ on the boundary of $\Omega$ such that the area of the sector of $\Omega$ between the rays $Ox$ and $OP_\theta$ is $\frac12\theta$ (see Fig.~\ref{fig:cos_sin_def}). By definition $\cos_\Omega\theta$ and $\sin_\Omega\theta$ are the coordinates of~$P_\theta$. If the generalized angle $\theta$ does not belong to the interval $\big[0;2\mathbb{S}\big)$, then we define the functions $\cos_\Omega$ and $\sin_\Omega$ as periodic with the period $2\mathbb{S}$; i.e., for $k\in\Z$ such that $\theta + 2\mathbb{S}k \in[0;2\mathbb{S})$ we put
	\[
	\cos_\Omega\theta = \cos_\Omega(\theta + 2\mathbb{S}k);\qquad
	\sin_\Omega\theta = \sin_\Omega(\theta + 2\mathbb{S}k);\qquad
	P_\theta = P_{\theta+2\mathbb{S}k}.
	\]
\end{definition}

Note that all the properties of $\sin_\Omega$ and $\cos_\Omega$ listed below can be easily proved once the appropriate definition is given. 

Obviously, $\sin_\Omega0=0$. If $\Omega$ is the unit circle centered at the origin, then the above definition produces the classical trigonometric functions. If $\Omega$ differs from the unit circle, then the functions $\cos_\Omega$ and $\sin_\Omega$, of course, differ from the classical functions $\cos$ and $\sin$. Nonetheless they inherit a lot of properties from the classical case and can be usually computed explicitly.

We will use the polar set $\Omega^\polar$ together with the set $\Omega$:
\[
\Omega^\polar = \{(p,q)\in\R^{2*}:px+qy\le 1\mbox{ for all }(x,y)\in\Omega\}\subset\R^{2*}.
\]

\noindent The polar set $\Omega^\polar$ is (always) a convex and compact (as $0\in\mathrm{int}\,\Omega$) set and $0\in\mathrm{int}\,\Omega^\polar$ (as $\Omega$ is bounded). To avoid confusion we will assume that the set $\Omega$ lies in the plane with coordinates $(x,y)$ and the polar set $\Omega^\polar$ lies in the plane with coordinates $(p,q)$.

Note that $\Omega^{\circ\circ}=\Omega$ by the bipolar theorem (see \cite[Theorem 14.5]{Rockafellar}). We can apply the above definition of the generalized trigonometric functions to the polar set $\Omega^\polar$ and an arbitrary angle $\psi\in\R$ to construct $\cos_{\Omega^\polar}\psi$ and $\sin_{\Omega^\polar}\psi$, which are the coordinates of the appropriate point $Q_\psi\in\partial\Omega^\polar$. From the definition of the polar set it follows that
\begin{equation}
\label{eq:non_pythagorean_inequality}
	\cos_\Omega\theta\cos_{\Omega^\polar}\psi + 
	\sin_\Omega\theta\sin_{\Omega^\polar}\psi \le 1.
\end{equation}

\begin{definition}
	We say that angles $\theta\in\R$ and $\theta^\polar\in\R$ correspond to each other and write $\theta\xleftrightarrow{\Omega\,}\theta^\polar$ if the supporting half-plane of $\Omega$ at $P_\theta$ is determined by the (co)vector~$Q_{\theta^\polar}$ (see Fig.~\ref{fig:thetha_to_theta_polar_pic}).
\end{definition}

As it was said properties of the classical functions $\cos$ and $\sin$ are inherited by two pairs of functions for the sets $\Omega$ and $\Omega^\polar$. We start with the Pythagorean identity $\cos^2\theta+\sin^2\theta=1$, which takes the following form:

\begin{theorem}[see~{\cite[Theorem~1]{CT1}}]
	\label{thm:main_trig_thm}
	The definition of the correspondence of $\theta$ and $\theta^\polar$ is symmetric, i.e., $\theta\xleftrightarrow{\Omega\,}\theta^\polar$ is equivalent to $\theta^\polar\xleftrightarrow{\Omega^\polar}\theta$. Moreover, an analogue of the main Pythagorean identity holds:
	\begin{equation}
	\label{eq:main_trig}
	\theta\xleftrightarrow{\Omega\,}\theta^\polar
	\quad\Longleftrightarrow\quad
	\cos_\Omega\theta\cos_{\Omega^\polar}\theta^\polar + 
	\sin_\Omega\theta\sin_{\Omega^\polar}\theta^\polar = 1.
	\end{equation}
\end{theorem}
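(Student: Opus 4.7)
The proof proposal rests on recognizing that the correspondence $\theta \xleftrightarrow{\Omega\,} \theta^\polar$ is, after unwinding definitions, exactly the incidence relation $\langle Q_{\theta^\polar}, P_\theta\rangle = 1$ between a boundary point of $\Omega$ and a boundary point of $\Omega^\polar$. My first step is to make this translation precise: $Q_{\theta^\polar}$ determines a supporting half-plane of $\Omega$ at $P_\theta$ iff the affine hyperplane $\{(x,y) : Q_{\theta^\polar,1}\,x + Q_{\theta^\polar,2}\,y = c\}$ contains $P_\theta$ and $\Omega$ lies in the half-space where the functional is $\le c$. Since $Q_{\theta^\polar} \in \partial \Omega^\polar$, the defining inequality of the polar gives $c \le 1$ and in fact $c = 1$ is forced (the functional attains the value $1$ somewhere on $\Omega$, or else $Q_{\theta^\polar}$ would lie in $\Int\Omega^\polar$, contradicting $Q_{\theta^\polar} \in \partial \Omega^\polar$).

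Once this is set up, the Pythagorean identity~\eqref{eq:main_trig} is essentially immediate from inequality~\eqref{eq:non_pythagorean_inequality}. Forward direction: if $\theta \xleftrightarrow{\Omega\,} \theta^\polar$, then $Q_{\theta^\polar}$ supports $\Omega$ at $P_\theta$, so $\langle Q_{\theta^\polar}, P_\theta\rangle = 1$, which is exactly the claimed equality in coordinates $(\cos_\Omega\theta, \sin_\Omega\theta)$ and $(\cos_{\Omega^\polar}\theta^\polar, \sin_{\Omega^\polar}\theta^\polar)$. Backward direction: if the equality holds, then combining with~\eqref{eq:non_pythagorean_inequality} applied to all other angles $\theta'$ in place of $\theta$ (so to every other point of $\partial\Omega$, and hence every point of $\Omega$ by convexity) shows that the linear functional determined by $Q_{\theta^\polar}$ attains its maximum over $\Omega$ at $P_\theta$ with value $1$. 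That is precisely the assertion that $Q_{\theta^\polar}$ defines a supporting half-plane at $P_\theta$, i.e.\ $\theta \xleftrightarrow{\Omega\,} \theta^\polar$.

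For the symmetry claim, I invoke the bipolar theorem $\Omega^{\polar\polar} = \Omega$ cited from Rockafellar. The identity $\langle Q_{\theta^\polar}, P_\theta \rangle = 1$ is manifestly symmetric in its two factors, and the roles of $\Omega$ and $\Omega^\polar$ get swapped upon passing to the bipolar: $P_\theta \in \partial \Omega = \partial \Omega^{\polar\polar}$, and for every $y \in \Omega^\polar$ the defining inequality of $\Omega^\polar$ yields $\langle y, P_\theta\rangle \le 1$. Together with the equality at $Q_{\theta^\polar}$, this says that $P_\theta$ determines a supporting half-plane of $\Omega^\polar$ at $Q_{\theta^\polar}$, which is exactly the definition of $\theta^\polar \xleftrightarrow{\Omega^\polar} \theta$. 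Hence the two correspondences coincide.

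The main obstacle I anticipate is bookkeeping rather than substance: the definitions involve the area-parametrization of $\theta$ and $\theta^\polar$, so a priori it is not obvious that passing through the Minkowski/polar duality respects this particular normalization. However, the area parametrization is only used to produce the \emph{points} $P_\theta \in \partial\Omega$ and $Q_{\theta^\polar}\in\partial\Omega^\polar$; once these are fixed, the whole argument is a tautology of polar duality plus the bipolar theorem, and the parametrization drops out entirely. A minor care point is that at corners of $\Omega$ (resp.\ $\Omega^\polar$) the supporting covector is not unique, so a single $\theta$ corresponds to an interval of $\theta^\polar$ and vice versa; but this does not affect either implication because the argument is pointwise in $(P_\theta, Q_{\theta^\polar})$.
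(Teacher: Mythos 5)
Your argument is correct: reducing the correspondence $\theta\xleftrightarrow{\Omega\,}\theta^\polar$ to the incidence relation $\cos_\Omega\theta\cos_{\Omega^\polar}\theta^\polar+\sin_\Omega\theta\sin_{\Omega^\polar}\theta^\polar=1$, using that the support function of $\Omega$ equals $1$ on $\partial\Omega^\polar$ (so the supporting level is forced to be $1$), and getting symmetry from the manifest symmetry of this relation together with the bipolar theorem $\Omega^{\polar\polar}=\Omega$, is exactly the standard polar-duality proof. Note the paper itself gives no proof of this statement — it is quoted from~\cite{CT1} — and your route coincides with the argument given there, so there is nothing substantive to add.
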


The correspondence $\theta\xleftrightarrow{\Omega\,}\theta^\polar$ is not one-to-one in general. If the boundary of $\Omega$ has a corner at a point $P_\theta$, then the angle $\theta$ corresponds to the whole edge in $\Omega^\polar$ and vice versa, i.e., to any angle $\theta$ with $P_\theta$ on the same edge of $\Omega$ there corresponds one particular angle $\theta^\polar$ (up to $2\mathbb{S}^\polar\Z$, where $\mathbb{S}^\polar$ denotes the area of $\Omega^\polar$), and the boundary of $\Omega^\polar$ has a corner at the point $Q_{\theta^\polar}$. Nonetheless, it is natural to define a monotonic (multivalued and closed) function $\theta^\polar(\theta)$ that maps an angle $\theta$ to a maximal closed interval\footnote{Obviously, for any $\theta$ there exists an angle $\theta^\polar$ such that $\theta^\polar\xleftrightarrow{\Omega^\polar}\theta$. This can be easily proved by the hyperplane separation theorem.} of angles $\theta^\polar$ such that $\theta^\polar\xleftrightarrow{\Omega^\polar}\theta$. This function is quasiperiodic, i.e.,
\[
\theta^\polar(\theta+2\mathbb{S}k) = \theta^\polar(\theta) + 2\mathbb{S}^\polar k
\quad\mbox{with}\quad k\in\Z.
\]

If $\Omega$ is strictly convex, then the function $\theta^\polar(\theta)$ is strictly monotonic. If the boundary of $\Omega$ is $C^1$--smooth, then the function $\theta^\polar(\theta)$ is continuous.

Let us now compute derivatives of the functions $\cos_\Omega$ and $\sin_\Omega$. In the classical case $\cos\theta$ and $\sin\theta$ are smooth, $\cos'\theta=-\sin\theta$ and $\sin'\theta=\cos\theta$. In general case $\cos_\Omega\theta$ and $\sin_\Omega\theta$ are Lipschitz continuous and their derivatives are $-\sin_{\Omega^\polar}\theta^\polar$ and $\cos_{\Omega^\polar}\theta^\polar$. Precisely

\begin{theorem}[see~{\cite[Theorem~2]{CT1}}]
	\label{thm:derivatives}
	The functions $\cos_\Omega$ and $\sin_\Omega$ are Lipschitz continuous and have the left and right derivatives for all $\theta$, which coincide for a.e.\ $\theta$. Let us denote for short the whole interval between the left and right derivatives by the usual derivative stroke sign (if this set contains only one element, we usually omit braces). Then for a.e.~$\theta$, we have
	\[
	\cos'_\Omega\theta = -\sin_{\Omega^\polar}\theta^\polar
	\quad\mbox{ and }\quad
	\sin'_\Omega\theta = \cos_{\Omega^\polar}\theta^\polar,
	\]
	
	\noindent where $\theta\xleftrightarrow{\Omega\,}\theta^\polar$. Moreover, for any $\theta$
	\begin{align*}
	&\cos'_\Omega\theta = \{-\sin_{\Omega^\polar}\theta^\polar\quad\mbox{for all}\quad \theta^\polar\xleftrightarrow{\Omega^\polar}\theta\},\\
	&\sin'_\Omega\theta = \{\cos_{\Omega^\polar}\theta^\polar\quad\mbox{for all}\quad \theta^\polar\xleftrightarrow{\Omega^\polar}\theta\}.
	\end{align*}
	
	\noindent The similar formulae hold for $\cos_{\Omega^\polar}'\theta^\polar$ and $\sin_{\Omega^\polar}'\theta^\polar$.
\end{theorem}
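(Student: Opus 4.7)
The plan is to parameterize $\partial\Omega$ by the generalized angle $\theta$ itself, writing $P_\theta=(\cos_\Omega\theta,\sin_\Omega\theta)$, and then read off the one-sided derivatives from two ingredients: (i) the defining sector-area relation and (ii) the geometric meaning of $Q_{\theta^\polar}$ as an outward support covector to $\Omega$ at $P_\theta$. Since $\Omega$ is a compact convex body with $0\in\Int\Omega$, the boundary $\partial\Omega$ is a closed convex (hence rectifiable) curve, so the coordinate functions $\cos_\Omega$ and $\sin_\Omega$ have left and right derivatives at every $\theta$ that coincide off an at most countable set. Differentiating the identity $\int_0^\theta\bigl(\cos_\Omega s\,d\sin_\Omega s-\sin_\Omega s\,d\cos_\Omega s\bigr)=\theta$ (which is just the statement that the sector area equals $\theta/2$) gives, for either choice of one-sided derivative,
\[
	\cos_\Omega\theta\cdot(\sin_\Omega)'_{\pm}\theta \;-\; \sin_\Omega\theta\cdot(\cos_\Omega)'_{\pm}\theta \;=\; 1.
\]

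Next I would exploit the correspondence $\theta\xleftrightarrow{\Omega\,}\theta^\polar$. By its very definition, $Q_{\theta^\polar}=(\cos_{\Omega^\polar}\theta^\polar,\sin_{\Omega^\polar}\theta^\polar)$ is the outward support covector to $\Omega$ at $P_\theta$, so any one-sided tangent vector to $\partial\Omega$ at $P_\theta$ is orthogonal to $Q_{\theta^\polar}$ and points consistently with the counterclockwise orientation; hence
\[
	\bigl((\cos_\Omega)'_{\pm}\theta,(\sin_\Omega)'_{\pm}\theta\bigr)
	\;=\;\lambda_{\pm}\bigl(-\sin_{\Omega^\polar}\theta^\polar_{\pm},\cos_{\Omega^\polar}\theta^\polar_{\pm}\bigr)
\]
for some $\lambda_{\pm}\ge 0$, where $\theta^\polar_{-}\le\theta^\polar_{+}$ denote the endpoints of the (possibly degenerate) interval $\theta^\polar(\theta)$. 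Plugging this ansatz into the area identity of the previous paragraph and invoking the main Pythagorean identity \eqref{eq:main_trig} of Theorem~\ref{thm:main_trig_thm} collapses the coefficient to $\lambda_{\pm}=1$. This yields the pointwise formulas $(\cos_\Omega)'\theta=-\sin_{\Omega^\polar}\theta^\polar$ and $(\sin_\Omega)'\theta=\cos_{\Omega^\polar}\theta^\polar$ wherever $\theta^\polar(\theta)$ is single-valued, which covers almost every $\theta$.

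Lipschitz continuity of $\cos_\Omega$ and $\sin_\Omega$ then follows immediately because $|\sin_{\Omega^\polar}\theta^\polar|$ and $|\cos_{\Omega^\polar}\theta^\polar|$ are bounded uniformly by the diameter of the compact set $\Omega^\polar$. For the multivalued refinement at a corner of $\partial\Omega$ at $P_\theta$: the interval $[\theta^\polar_{-},\theta^\polar_{+}]$ parameterizes a straight edge of $\partial\Omega^\polar$, so as $\theta^\polar$ traverses this interval the coordinates $-\sin_{\Omega^\polar}\theta^\polar$ and $\cos_{\Omega^\polar}\theta^\polar$ sweep out exactly the closed intervals from $-\sin_{\Omega^\polar}\theta^\polar_{-}$ to $-\sin_{\Omega^\polar}\theta^\polar_{+}$ and from $\cos_{\Omega^\polar}\theta^\polar_{-}$ to $\cos_{\Omega^\polar}\theta^\polar_{+}$ respectively, matching the stated description as the full interval between the left and right derivatives. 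The analogous statements for $\cos_{\Omega^\polar}'$ and $\sin_{\Omega^\polar}'$ are obtained by swapping the roles of $\Omega$ and $\Omega^\polar$, using $\Omega^{\polar\polar}=\Omega$ and the symmetry already noted in Theorem~\ref{thm:main_trig_thm}.

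The main obstacle I anticipate is the bookkeeping at boundary singularities: pairing left derivatives with $\theta^\polar_{-}$ and right derivatives with $\theta^\polar_{+}$ (or vice versa, depending on orientation), ensuring that the sign of $\lambda_{\pm}$ really is positive, and checking that the sector-area differential retains the value $+1$ across both corners of $\partial\Omega$ (where the curve has a jump in tangent direction) and across flat edges of $\partial\Omega$ (where $\theta^\polar(\theta)$ is constant over an interval of $\theta$). These are all handled by the geometric picture but require explicit orientation conventions to nail down.
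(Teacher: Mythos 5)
This statement is not proved in the paper at all: Section~\ref{sec:ct} explicitly imports it from \cite[Theorem~2]{CT1} ``without any proofs'', so your proposal can only be judged on its own merits. Its skeleton is the natural one and the decisive steps are right: differentiating the sector--area identity gives $\cos_\Omega\theta\,(\sin_\Omega)'_\pm\theta-\sin_\Omega\theta\,(\cos_\Omega)'_\pm\theta=1$; the one--sided tangent direction at $P_\theta$ is orthogonal to the extreme support covector $Q_{\theta^\polar_\pm}$ on the corresponding side, so it equals $\lambda_\pm\bigl(-\sin_{\Omega^\polar}\theta^\polar_\pm,\cos_{\Omega^\polar}\theta^\polar_\pm\bigr)$ with $\lambda_\pm\ge0$; and since $\det[P_\theta,JQ]=\langle P_\theta,Q\rangle$ for the rotation $J$ by $90^\circ$, the area identity together with the Pythagorean identity \eqref{eq:main_trig} indeed forces $\lambda_\pm=1$. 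The description of the full derivative interval at a corner is also correct: the set of covectors corresponding to $\theta$ is the exposed face $\{Q\in\Omega^\polar:\langle Q,P_\theta\rangle=1\}$, a segment on which each coordinate is affine, so it sweeps exactly the interval between the one--sided derivatives; the transfer to $\Omega^\polar$ via $\Omega^{\polar\polar}=\Omega$ is fine.

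Two justifications, however, are genuinely missing as written. First, everywhere--existence of one--sided derivatives does not follow from ``$\partial\Omega$ is a rectifiable convex curve'': rectifiability only gives coordinates of bounded variation and a.e.\ differentiability, and a given parametrization of a rectifiable curve need not be one--sidedly differentiable at every point. What makes it true here is convexity plus the specific area parametrization: writing the boundary forward of $P_{\theta_0}$ as a convex graph $P=P_{\theta_0}+se_1+f(s)e_2$ over the one--sided tangent direction $e_1$ (so $f(s)=o(s)$), comparison of sector and triangle areas gives $\theta-\theta_0=cs+o(s)$ with $c=\det[P_{\theta_0},e_1]\ge r>0$ because $B_r\subset\Omega$ keeps every support line at distance at least $r$ from the origin; invertibility of this relation is exactly what produces the one--sided derivative, and this argument must be made, not cited. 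Second, your order of deductions is circular: to differentiate the sector--area identity you must know that the correction between the chord and the arc over an increment $h$ of $\theta$ is $o(h)$ (equivalently, that the variation of $(\cos_\Omega,\sin_\Omega)$ on $[\theta,\theta+h]$ is $O(h)$), i.e.\ essentially the Lipschitz bound --- yet you obtain Lipschitz continuity only at the very end, from boundedness of the derivatives you are in the process of establishing. The repair is to prove the Lipschitz estimate first, directly from the definition (for instance via the Euclidean polar angle $\phi$: $d\theta=\rho^2(\phi)\,d\phi$ with $r\le\rho\le R$ and $\rho$ Lipschitz in $\phi$, so $\theta\mapsto P_\theta$ is Lipschitz), and only then run your differentiation and support--covector argument. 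With those two points fixed, your proof goes through and matches the standard argument behind \cite[Theorem~2]{CT1}.
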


The two types of formulae for derivatives stated in the previous theorem coincide if for given $\theta$ there exists a unique $\theta^\polar\xleftrightarrow{\Omega^\polar}\theta$. If so, then the both functions $\cos_\Omega$ and $\sin_\Omega$ have derivatives at $\theta$. Precisely, the function $\cos_\Omega$ has derivative at $\theta$ iff values of $\sin_{\Omega^\polar}\theta^\polar$ coincide for all $\theta^\polar\xleftrightarrow{\Omega^\polar}\theta$, and uniqueness of $\theta^\polar\xleftrightarrow{\Omega^\polar}\theta$ is an obvious sufficient condition for this. The function $\sin_\Omega$ has a similar property.

Let us note that any Lipschitz continuous function is a.e.\ differentiable. So if no confuse ensues we will write for short $\cos_\Omega'\theta=-\sin_{\Omega^\polar}\theta^\polar$ and $\sin_\Omega'\theta=\cos_{\Omega^\polar}\theta^\polar$ always meaning the result obtained in Theorem~\ref{thm:derivatives}.

It is easy to see that both functions $\cos_\Omega$ and $\sin_\Omega$ have one interval of increasing and one interval of decreasing during their period. These two intervals can be separated by at most two intervals of constancy, which appear if $\Omega$ has edges parallel to the axes. Intervals of convexity and concavity can be also determined by the formulae of differentiation.

\begin{corollary}[see~{\cite[Corollary~1]{CT1}}]
	Each of the functions $\cos_\Omega$ and $\sin_\Omega$ is concave on any interval with non-positive values and is convex on any interval with non-negative values.
\end{corollary}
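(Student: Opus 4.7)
My plan is to apply Theorem~\ref{thm:derivatives} twice to derive a pointwise sign relation between $f$ and $f''$ for $f \in \{\cos_\Omega, \sin_\Omega\}$, from which the stated alternation of concavity and convexity on each sign-constant interval of $f$ follows by the second-derivative test.

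First, Theorem~\ref{thm:derivatives} gives $\cos_\Omega'(\theta) = -\sin_{\Omega^\polar}(\theta^\polar)$ for a.e.\ $\theta$, with $\theta \xleftrightarrow{\Omega\,} \theta^\polar$. Applying the same theorem to the polar set $\Omega^\polar$, and using both the bipolar identity $\Omega^{\polar\polar} = \Omega$ and the symmetric correspondence from Theorem~\ref{thm:main_trig_thm}, yields $\sin_{\Omega^\polar}'(\theta^\polar) = \cos_\Omega(\theta)$ and, analogously, $\cos_{\Omega^\polar}'(\theta^\polar) = -\sin_\Omega(\theta)$. The chain rule at points where the monotone selector $\theta \mapsto \theta^\polar(\theta)$ is differentiable then gives
\[
\cos_\Omega''(\theta) \;=\; -\cos_\Omega(\theta) \cdot \frac{d\theta^\polar}{d\theta},
\qquad
\sin_\Omega''(\theta) \;=\; -\sin_\Omega(\theta) \cdot \frac{d\theta^\polar}{d\theta}.
\]

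Second, the crucial observation is that $\theta^\polar(\theta)$ is a monotone non-decreasing multivalued map (as recorded just before Theorem~\ref{thm:derivatives}), so its a.e.-derivative satisfies $d\theta^\polar/d\theta \ge 0$. Combined with the above identities this shows that $f''$ has a definite sign on any interval on which $f$ has constant sign, and the standard second-derivative characterization of convexity/concavity then produces exactly the alternation asserted in the corollary on each such interval.

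The main obstacle is the lack of classical regularity: $\cos_\Omega$ and $\sin_\Omega$ are only Lipschitz, and $\theta^\polar$ is merely a monotone multivalued correspondence, so the pointwise chain rule above can fail at the countably many points corresponding to corners of $\Omega$ and $\Omega^\polar$. I would bypass this by passing to the distributional setting: $(\cos_\Omega)''$ is the distributional derivative of the BV function $\cos_\Omega'$ and is therefore a signed Borel measure. The a.e.-identity above, together with non-negativity of the singular (atomic) part that arises from jumps of the monotone selector $\theta^\polar(\theta)$ at corners of $\Omega^\polar$, forces this measure to be of a single definite sign on every constant-sign interval of $\cos_\Omega$; this is equivalent to convexity (respectively concavity) of $\cos_\Omega$ on the interval, and the argument for $\sin_\Omega$ is identical.
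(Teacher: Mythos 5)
The paper itself gives no proof of this corollary --- Section~\ref{sec:ct} states it without proof, citing \cite[Corollary~1]{CT1} --- so your argument must stand on its own, and its core (the differentiation formulae of Theorem~\ref{thm:derivatives} applied to both $\Omega$ and $\Omega^\polar$, plus monotonicity of $\theta\mapsto\theta^\polar$) is indeed the intended route. But there is a sign problem you did not confront. Your identities $\cos_\Omega''=-\cos_\Omega\cdot\frac{d\theta^\polar}{d\theta}$, $\sin_\Omega''=-\sin_\Omega\cdot\frac{d\theta^\polar}{d\theta}$ with $\frac{d\theta^\polar}{d\theta}\ge 0$ give concavity on intervals where the function is \emph{non-negative} and convexity where it is \emph{non-positive} (exactly as in the classical case $\cos''=-\cos$: for $\Omega$ the unit disc, $\cos$ is concave, not convex, on $[-\pi/2,\pi/2]$). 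That is the reverse of the pairing printed in the statement above, so your closing claim that the computation yields ``exactly the alternation asserted'' cannot be right as a proof of the literal statement; the literal statement admits the classical counterexample, and what your computation can prove is the corrected pairing (which is the one in \cite{CT1}). You should have detected and flagged this discrepancy rather than asserting agreement.

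The second genuine gap is the handling of the singular part, which is precisely the point where Lipschitz-only regularity bites. First, the jumps of the multivalued map $\theta^\polar(\theta)$ occur at angles $\theta_0$ where $\partial\Omega$ has a corner at $P_{\theta_0}$ (corners of $\Omega^\polar$ give intervals of \emph{constancy}, not jumps). Second, the atomic part of the measure $(\cos_\Omega)''$ is \emph{not} non-negative in general: the atom at $\theta_0$ equals the jump of $\cos_\Omega'=-\sin_{\Omega^\polar}\theta^\polar$, and since for $\psi$ strictly inside the edge interval $(\theta^\polar_{-},\theta^\polar_{+})$ corresponding to $\theta_0$ Theorem~\ref{thm:derivatives} (applied to $\Omega^\polar$) gives $\sin_{\Omega^\polar}'\psi=\cos_\Omega\theta_0$, the atom equals $-\cos_\Omega(\theta_0)\,(\theta^\polar_{+}-\theta^\polar_{-})$. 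So the singular part is non-positive exactly where $\cos_\Omega\ge0$ and non-negative where $\cos_\Omega\le0$; this must be computed, not asserted, and with your blanket ``non-negativity'' the measure would have no definite sign on intervals where $\cos_\Omega>0$, so the argument as written does not close even for the corrected statement. A cleaner way to finish without distributions: on an interval where $\cos_\Omega\ge0$, the one-sided derivative $-\sin_{\Omega^\polar}\theta^\polar(\theta)$ is non-increasing, because $\theta\mapsto\theta^\polar$ is non-decreasing and $\sin_{\Omega^\polar}$ is non-decreasing on the corresponding range of polar angles (its derivative there is $\cos_\Omega\ge0$); a Lipschitz function with non-increasing one-sided derivative is concave, and this absorbs the corner jumps automatically. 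The same argument, with signs flipped, gives convexity on intervals of non-positivity, and the case of $\sin_\Omega$ is identical.
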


We also need an analogue of the polar change of coordinates:
\begin{equation}
\label{eq:polar_change_of_coordinates}
	\begin{cases}
		x=r\cos_\Omega\theta;\\
		y=r\sin_\Omega\theta.
	\end{cases}
\end{equation}

\noindent Note that in the classical case the angles are defined up to a summand $2\pi k$, $k\in\Z$. Here we have a similar situation: generalized angles are defined up to a summand $2\mathbb{S}k$, $k\in\Z$.

This change of variables is smooth in $r$ and Lipschitz continuous in $\theta$. Hence it has a.e.\ partial derivative with respect to $\theta$. The Jacobian matrix has the following form:
\[
J=\begin{pmatrix}
x'_r & x'_\theta\\
y'_r & y'_\theta
\end{pmatrix}
=
\begin{pmatrix}
\cos_\Omega\theta & -r\sin_{\Omega^\polar}\theta^\polar\\
\sin_\Omega\theta & r\cos_{\Omega^\polar}\theta^\polar\\
\end{pmatrix},
\quad\mbox{where}\quad
\theta^\polar \xleftrightarrow{\Omega^\polar}\theta.
\]

\noindent Using the main Pythagorean identity we see that the Jacobian is equal to~$r$:
\[
	\det J = r.
\]

Let us find the inverse change of variables $r(x,y)$ and $\theta(x,y)$. The most convenient way to do this is the following one:

\begin{theorem}[{\cite[Section 2]{CT1} and \cite[Theorem 3]{CT2}}]
\label{thm:polar_change}
	Let $(x(t),y(t))$ be an absolutely continuous curve that does not pass through the origin. Then the functions $r(t)$ and $\theta(t)$ from \eqref{eq:polar_change_of_coordinates} are absolutely continuous\footnote{The angle $\theta$ is defined up to $2\mathbb{S}\Z$ as always.} and satisfy
	\[
		r=s_{\Omega^\polar}(x,y)\qquad\mbox{and}\qquad
		\dot\theta=\frac{x\dot y-\dot x y}{r^2}.
	\]
	\noindent The first equation holds for all $t$, and the second one holds for a.e.\ $t$.
\end{theorem}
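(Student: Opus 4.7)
The plan is to read off $r$ directly from the polar change of coordinates~\eqref{eq:polar_change_of_coordinates}, then establish absolute continuity of $r(t)$ and of $\theta(t)$ separately, and finally derive the ODE for $\theta$ by differentiating $x=r\cos_\Omega\theta$, $y=r\sin_\Omega\theta$ and collapsing the resulting expression via the main Pythagorean identity~\eqref{eq:main_trig}.

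The first identity is almost immediate: since $(\cos_\Omega\theta,\sin_\Omega\theta)=P_\theta\in\partial\Omega$, positive homogeneity of the Minkowski functional gives $\mu_\Omega(x,y)=r$, and the standard polarity relation $\mu_\Omega=s_{\Omega^\polar}$ (which holds because $0\in\Int\Omega$) yields $r=s_{\Omega^\polar}(x,y)$ pointwise in $t$. Being a support function, $s_{\Omega^\polar}$ is Lipschitz on $\R^2$, so $r(t)$ is absolutely continuous as a Lipschitz composition of an AC curve.

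For absolute continuity of $\theta$, the curve lies in a compact $K\subset\R^2\setminus\{0\}$ on which $r$ is bounded above and below by positive constants, so the radial projection $(x,y)\mapsto(x,y)/r$ is Lipschitz on $K$. Moreover, the boundary parameterization $\theta\mapsto P_\theta$ is bi-Lipschitz: forward, because $\cos_\Omega,\sin_\Omega$ are Lipschitz (Theorem~\ref{thm:derivatives}); backward, because parameterizing $\partial\Omega$ by arc length the infinitesimal area-swept formula gives $d\theta=|P\times P'|\,ds$, and $|P\times P'|$ is the distance from the origin to the tangent line at $P$, bounded below by $d(0,\partial\Omega)>0$ since $0\in\Int\Omega$. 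Composing these Lipschitz maps with $(x(t),y(t))$ and choosing a continuous branch of $\theta$ exhibits $\theta(t)$ as a locally Lipschitz function of the AC curve, hence AC.

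With both $r$ and $\theta$ absolutely continuous, applying Theorem~\ref{thm:derivatives} and the chain rule to $x=r\cos_\Omega\theta$, $y=r\sin_\Omega\theta$ gives, a.e.\ in $t$,
\[
\dot x=\dot r\cos_\Omega\theta-r\sin_{\Omega^\polar}\theta^\polar\,\dot\theta,\qquad
\dot y=\dot r\sin_\Omega\theta+r\cos_{\Omega^\polar}\theta^\polar\,\dot\theta,
\]
for any $\theta\xleftrightarrow{\Omega\,}\theta^\polar$. Forming $x\dot y-\dot x y$, the $\dot r$-terms cancel and the coefficient of $\dot\theta$ collapses to $r^2$ via~\eqref{eq:main_trig}; dividing by $r^2>0$ produces the asserted formula. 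The main obstacle is precisely this chain-rule step, since Theorem~\ref{thm:derivatives} returns only set-valued derivatives of $\cos_\Omega,\sin_\Omega$ at their singular angles; the resolution is the classical fact that for an AC function $\theta(t)$ one has $\dot\theta=0$ almost everywhere on the preimage of any Lebesgue-null set (a direct consequence of the area formula for AC functions), so on that preimage both sides of the target identity vanish automatically, while elsewhere the ordinary chain rule applies to any measurable selection from the set-valued derivative.
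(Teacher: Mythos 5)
Since the paper itself gives no proof of Theorem~\ref{thm:polar_change} (it is imported from \cite{CT1} and \cite{CT2}), I can only judge your argument on its own terms. Your first and third steps are correct: $r=\mu_\Omega(x,y)=s_{\Omega^\polar}(x,y)$ via the bipolar relation, absolute continuity of $r$ as a Lipschitz function of an AC curve, and the concluding computation of $x\dot y-\dot x y$ from Theorem~\ref{thm:derivatives} and identity~\eqref{eq:main_trig}, including the correct handling of the null set of angles where $\cos_\Omega,\sin_\Omega$ are not differentiable (an AC function has vanishing derivative a.e.\ on the preimage of a null set, so any selection of $\theta^\polar$ works there) --- all of this is sound.

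The gap is in the step that carries the real weight, the absolute continuity of $\theta$. What you need is that the inverse parameterization $P\mapsto\theta$ of $\partial\Omega$ is Lipschitz with respect to the \emph{Euclidean} distance between boundary points, so that it can be composed with the AC curve $t\mapsto(x(t),y(t))/r(t)$. But the inequality you invoke, $d\theta=|P\times P'|\,ds$ with $|P\times P'|\ge\rho:=d(0,\partial\Omega)>0$, controls the opposite direction: it gives $ds\le\rho^{-1}\,d\theta$, i.e.\ that $\theta\mapsto P_\theta$ is Lipschitz, which you already had from Theorem~\ref{thm:derivatives}. For $P\mapsto\theta$ you would need the upper bound $d\theta\le|P|\,ds\le R\,ds$ (with $R=\max_{\partial\Omega}|P|$) \emph{and}, crucially, a comparison of boundary arc length with chord length, since Lipschitz continuity with respect to arc length does not by itself give Lipschitz continuity with respect to $|P_1-P_2|$; neither point appears in your argument. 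The needed statement is true and the repair is short: the distance between $\theta_1$ and $\theta_2$ in $\R/2\mathbb{S}\Z$ is at most twice the area of the sector of $\Omega$ lying inside the angle $\alpha\le\pi$ spanned by the rays $OP_1$ and $OP_2$, hence at most $R^2\alpha$; and since $|P_1|,|P_2|\ge\rho$, one has $|P_1-P_2|\ge 2\rho\sin(\alpha/2)\ge\frac{2\rho\alpha}{\pi}$, so $P\mapsto\theta$ is Lipschitz with constant $\frac{\pi R^2}{2\rho}$. With this substitute justification (or a local arc--chord comparison for convex curves) your composition argument, and with it the whole proof, goes through.
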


In~\cite{CT1} using these formulae, the functions $\cos_\Omega$ and $\sin_\Omega$ were completely computed for the case when $\Omega$ is an arbitrary polygon. Some additional examples of $\Omega$ were computed in~\cite[Exmples 1,2, and 3]{CT2}. In this paper we compute $\cos_\Omega$ and $\sin_\Omega$ for $\Omega$ being the unit balls on $\R^2$ in $L_p$ metric for $1<p<\infty$ (see Sec.~\ref{sec:lp}).

\section{Explicit formulae in terms of convex trigonometry}
\label{sec:explicit_formulae}

In this section, we obtain explicit formulae for extremals in problem~\eqref{eq:main_problem} in the case when set $U$ of admissible controls satisfies the main assumption.

We denote by $\partial s(A)$ the subdifferential of a convex function $s$ at a point $A$ as usual. We will also use the following compact convex set
\[
	\Xi=\{(\lambda_1,\ldots,\lambda_n):\lambda_i\ge 0,\mu(\lambda_1,\ldots,\lambda_n)\le 1\},
\]
whose supporting function is denoted by $s_\Xi$.
\begin{theorem}
\label{thm:main}
	Suppose that $U$ satisfies the main assumption. Then for any extremal\footnote{I.e.\ a solution to PMP projection on the base $\Hb{2n+1}$.} $(x(t),y(t),z(t))$ in problem~\eqref{eq:main_problem} on $\Hb{2n+1}$, there exists
	\begin{itemize}
		\item constants $\gamma=0,\pm1$ and $A=(A_1,\ldots,A_n)\in\R^n_+$ not vanishing at the same time; 
		\item constants $\theta_{i0}^\polar\in\R$ for each $i$ with $A_i>0$; 
		\item measurable functions $\lambda(t)=(\lambda_1(t),\ldots,\lambda_n(t))\in\partial s_{\Xi}(A)$ satisfying the property
		\[
			\forall i\quad(A_i=0\,\&\,\gamma\ne 0)\Rightarrow \lambda_i\equiv0
		\]
	\end{itemize}
	such that
	\begin{enumerate}
		\item\label{item:gamma_not_null} If $\gamma\ne 0$, then for each $i$ with $A_i>0$, we have
		\[
			x_i=\gamma A_i(\sin_{\Omega_i^\polar}\theta_i^\polar - \sin_{\Omega_i^\polar}\theta_{i0}^\polar)
			\quad\mbox{and}\quad
			y_i=\gamma A_i(\cos_{\Omega_i^\polar}\theta_{i0}^\polar - \cos_{\Omega_i^\polar}\theta_i^\polar)
		\]
		where $\theta_i^\polar = \theta_{i0}^\polar + \frac{\gamma}{A_i} \int_0^t\lambda_i(\tau)\,d\tau$; for each $i$ with $A_i=0$, we have $x_i\equiv y_i\equiv0$; and
		\[
			2z=	\gamma s_\Xi(A) t +
					\sum_{i:A_i>0} A_i^2\left(
						\sin_{\Omega_i^\polar}\theta_{i0}^\polar\cos_{\Omega_i^\polar}\theta_i^\polar -
						\cos_{\Omega_i^\polar}\theta_{i0}^\polar\sin_{\Omega_i^\polar}\theta_i^\polar
					\right).
		\]
		
		\item If $\gamma=0$, then for each $i$, we have
		\[
			x_i = \int_0^t \lambda_i(\tau)\cos_{\Omega_i}\theta_i (\tau)\,d\tau
			\quad\mbox{and}\quad
			y_i = \int_0^t \lambda_i(\tau)\sin_{\Omega_i}\theta_i (\tau)\,d\tau
		\]
		where $\theta_i(t)$ is a measurable function such that $\theta_i(t)\xleftrightarrow{\Omega_i}\theta_{i0}^\polar$ if $A_i>0$, and $\theta_i(t)$ is an arbitrary measurable function if $A_i=0$; and\footnote{Since $\lambda_i(t)$ and $\theta_i(t)$ have a lot of freedom, nothing more can be said about $x$, $y$, and $z$ in the general case $\gamma=0$ (this situation is completely similar to the case $n=1$, see~\cite{BerestovskiiHeisenberg,CT1}).} $z=\frac12\int_0^t \sum_i (x_i(\tau)\dot y_i(\tau)-\dot x_i(\tau)y_i(\tau))\,d\tau$.
	\end{enumerate}
	Moreover, if a trajectory $(x(t),y(t),z(t))$ has one of the described forms, then it is an extremal in problem~\eqref{eq:main_problem} on $\Hb{2n+1}$.
\end{theorem}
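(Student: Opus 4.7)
The plan is to apply Pontryagin's maximum principle (PMP) to the time-minimal problem~\eqref{eq:main_problem} and integrate the resulting Hamiltonian system by means of the convex trigonometry of Sec.~\ref{sec:ct}. First, introduce the left-invariant momentum functions $\xi_i=h_{X_i}$, $\eta_i=h_{Y_i}$, $\zeta=h_Z$ on $T^*\Hb{2n+1}$, where $X_i,Y_i,Z$ is the canonical left-invariant basis with $[X_i,Y_j]=\delta_{ij}Z$. Centrality of $Z$ gives $\zeta\equiv\mathrm{const}$, and the Poisson brackets of the control Hamiltonian $H_u=\sum_i(u_i^x\xi_i+u_i^y\eta_i)$ produce $\dot\xi_i=-\zeta u_i^y$, $\dot\eta_i=\zeta u_i^x$. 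A standard rescaling of the adjoint covector normalizes $\zeta$ into the trichotomy $\gamma\in\{0,\pm 1\}$.

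Under the main assumption the maximization of $H_u$ over $U$ decomposes in a nested way. Writing $(\xi_i,\eta_i)=A_i(\cos_{\Omega_i^\polar}\theta_i^\polar,\sin_{\Omega_i^\polar}\theta_i^\polar)$ in the sense of Theorem~\ref{thm:polar_change} (so $A_i=\mu_{\Omega_i^\polar}(\xi_i,\eta_i)$), the inner maximum over the direction of $u_i\in\Omega_i$ at fixed $\lambda_i=\mu_{\Omega_i}(u_i)$ equals $\lambda_iA_i$, attained on the set of $u_i=\lambda_i(\cos_{\Omega_i}\theta_i,\sin_{\Omega_i}\theta_i)$ with $\theta_i\xleftrightarrow{\Omega_i}\theta_i^\polar$; the outer maximum over $\lambda\in\Xi$ equals $s_\Xi(A)$ and is attained precisely on $\partial s_\Xi(A)$. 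Substituting the optimal control back into the covector ODEs and applying Theorem~\ref{thm:polar_change} in the covector plane, the main Pythagorean identity~\eqref{eq:main_trig} collapses the numerator of $\dot\theta_i^\polar=(\xi_i\dot\eta_i-\dot\xi_i\eta_i)/A_i^2$ and gives $\dot\theta_i^\polar=\gamma\lambda_i/A_i$. Differentiating $\xi_i=A_i\cos_{\Omega_i^\polar}\theta_i^\polar$ via Theorem~\ref{thm:derivatives} and comparing with $\dot\xi_i=-\zeta u_i^y$ forces $\dot A_i\cos_{\Omega_i^\polar}\theta_i^\polar=0$; the analogous computation for $\eta_i$ gives $\dot A_i\sin_{\Omega_i^\polar}\theta_i^\polar=0$, and since $0\in\Int\Omega_i^\polar$ these two generalized trig functions cannot vanish simultaneously, so $A_i$ is constant.

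The base formulae then follow by direct integration: Theorem~\ref{thm:derivatives} gives $\frac{d}{dt}\sin_{\Omega_i^\polar}\theta_i^\polar=\cos_{\Omega_i}\theta_i\cdot\dot\theta_i^\polar=(\gamma/A_i)\dot x_i$, which integrates to the stated formula for $x_i$, and symmetrically for $y_i$ (using $\gamma^2=1$). For $z$, substituting into $2\dot z=\sum_i(x_i\dot y_i-\dot x_iy_i)$, expanding, and collapsing one cross-term by~\eqref{eq:main_trig} splits the integrand into a linear-in-$t$ piece with coefficient $\gamma\sum_i\lambda_iA_i=\gamma s_\Xi(A)$, constant because $\lambda(t)\in\partial s_\Xi(A)$, and a trig-product piece whose primitive is the stated $A_i^2$-sum. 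The side constraint $A_i=0\,\&\,\gamma\ne 0\Rightarrow\lambda_i\equiv 0$ is forced directly by $\dot\xi_i=-\zeta u_i^y$ with $\xi_i\equiv 0$ and $\zeta\ne 0$ (and likewise for $\eta_i$). The degenerate branch $\gamma=0$ is immediate: the covectors are frozen in time, so the maximization condition only pins $\theta_i(t)\xleftrightarrow{\Omega_i}\theta_{i0}^\polar$ when $A_i>0$, while $\lambda_i(t)$ (and $\theta_i(t)$ when $A_i=0$) remain arbitrary measurable selections, yielding part~2 of the theorem.

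The converse direction follows by reversing the calculation: given a trajectory of the stated form, reconstruct candidate covectors $(\xi_i,\eta_i,\zeta)$ from $A_i,\theta_i^\polar,\gamma$, check absolute continuity and nontriviality, verify the adjoint ODEs, and note that maximization of $H_u$ holds by the very convex-duality origin of the decomposition. I expect the main obstacle to be the bookkeeping around non-uniqueness: when the $\Omega_i$ or $\Xi$ have corners or flat edges, both $\theta_i\xleftrightarrow{\Omega_i}\theta_i^\polar$ and $\partial s_\Xi(A)$ are genuinely set-valued, so Theorem~\ref{thm:derivatives} must be invoked in its multivalued form, measurable selections of $\lambda_i(t)$ and $\theta_i(t)$ must be justified, and the equality $\dot A_i=0$ must be argued without pointwise differentiability of the generalized trig functions --- this is the genuinely sub-Finsler content that the strictly convex (in particular sub-Riemannian) case would hide.
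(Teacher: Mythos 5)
Your proposal follows essentially the same route as the paper's proof: PMP with the left--invariant vertical subsystem $\dot h_i=-\gamma\lambda_i v_i$, $\dot g_i=\gamma\lambda_i u_i$, $\dot\gamma=0$, the convex--trigonometric parametrization $(h_i,g_i)=A_i(\cos_{\Omega_i^\polar}\theta_i^\polar,\sin_{\Omega_i^\polar}\theta_i^\polar)$ with the nested maximization giving $\theta_i\xleftrightarrow{\Omega_i}\theta_i^\polar$ and $\lambda\in\partial s_\Xi(A)$, the identities $\dot\theta_i^\polar=\gamma\lambda_i/A_i$ and $\dot A_i=0$, the same integrations for $x_i$, $y_i$, $z$, the same treatment of $A_i=0$ and $\gamma=0$, and the converse by reconstructing the covector. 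Your minor deviations (deriving the adjoint equations via Poisson brackets, and concluding $\dot A_i=0$ from $\dot A_i\cos_{\Omega_i^\polar}\theta_i^\polar=\dot A_i\sin_{\Omega_i^\polar}\theta_i^\polar=0$ instead of $\dot A_i=\dot h_iu_i+\dot g_iv_i=0$) are cosmetic, and the subtleties you flag at the end are precisely the points the paper addresses.
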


We start with some discussion on results of the theorem and after that present the proof.

The case $\gamma\ne 0$ has very nice geometrical interpretation: if $A_i>0$ for some $i$, then pair $(x_i,y_i)$ moves along the boundary of the polar set $\Omega_i^\polar$ rotated $-\gamma90^\polar$ (remind $\gamma=\pm1$), stretched by $A_i$ times, and shifted in such a way that the origin belongs to the obtained set boundary. Moreover, all this rotations have the same direction: counterclockwise if $\gamma=1$ or clockwise if $\gamma=-1$. The motion speed is determined by $\lambda_i(t)$, which may vary in time if function $\mu$ is not strictly convex.

\begin{corollary}
\label{cor:stricly_convex}
	If $\mu$ is strictly convex, then $\lambda$ is constant and functions $\theta_i^\polar$ in the case $\gamma\ne 0$ of Theorem~\ref{thm:main} becomes linear, $\theta_i^\polar = \theta_{i0}^\polar + \gamma\lambda_it/A_i$. If additionally set $\Omega_i$ is strictly convex for some index $i$, then $x_i$ and $y_i$ in the case $\gamma=0$ of Theorem~\ref{thm:main} are linear.
\end{corollary}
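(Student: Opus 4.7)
Both parts of this corollary should follow cleanly from Theorem~\ref{thm:main} once two standard convex-analytic duality facts are in hand: (a)~the subdifferential of the support function $s_\Xi$ at a point $A\neq 0$ is a singleton precisely when the face of $\Xi$ exposed by $A$ is a single point, and this holds for every $A\in\R^n_+\setminus\{0\}$ as soon as $\Xi=\{\mu\le 1\}$ has no flat boundary pieces in $\{\mu=1\}$; (b)~strict convexity of a convex body is dual to $C^1$-smoothness of the boundary of its polar (see~\cite[\S25]{Rockafellar}), so that strict convexity of $\Omega_i$ makes the correspondence $\theta_i^\polar\mapsto\theta_i$ single-valued.

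For the first assertion, under the hypothesis that $\mu$ is strictly convex, fact~(a) combined with the monotonicity of $\mu$ from the main assumption forces $\partial s_\Xi(A)$ to collapse to the single point $\nabla s_\Xi(A)$ for every $A\in\R^n_+\setminus\{0\}$, with the convention $\lambda_i = 0$ whenever $A_i = 0$ (monotonicity of $\mu$ makes this choice the unique maximizer). Since Theorem~\ref{thm:main} has already produced $A$ as a first integral of the extremal, the measurable selection $\lambda(t)\in\partial s_\Xi(A)$ is thereby forced to equal the constant vector $\nabla s_\Xi(A)$. Inserting this into the Theorem~\ref{thm:main} formula $\theta_i^\polar(t) = \theta_{i0}^\polar + (\gamma/A_i)\int_0^t\lambda_i(\tau)\,d\tau$ for an index $i$ with $A_i > 0$ in the case $\gamma\ne 0$ gives the linear dependence $\theta_i^\polar(t) = \theta_{i0}^\polar + \gamma\lambda_i t/A_i$.

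For the second assertion, assume additionally that $\Omega_i$ is strictly convex for some index $i$. In the case $\gamma = 0$ of Theorem~\ref{thm:main}, the angle $\theta_{i0}^\polar$ is fixed and $\theta_i(t)$ is any measurable selection with $\theta_i(t)\xleftrightarrow{\Omega_i\,}\theta_{i0}^\polar$ (when $A_i > 0$). Fact~(b) says strict convexity of $\Omega_i$ makes the preimage $\{\theta_i:\theta_i\xleftrightarrow{\Omega_i\,}\theta_{i0}^\polar\}$ a single point, so $\theta_i(t)$ is constant. Combined with the first assertion (ensuring $\lambda_i$ is constant), the integrands $\lambda_i\cos_{\Omega_i}\theta_i$ and $\lambda_i\sin_{\Omega_i}\theta_i$ in the $\gamma=0$ formulas of Theorem~\ref{thm:main} are constant, giving the claimed linearity of $x_i$ and $y_i$. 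The only point requiring attention is the edge case $A_i = 0$: in the first assertion the forced $\lambda_i = 0$ causes no issue, while in the second assertion this case is already excluded by Theorem~\ref{thm:main}, since then $\theta_i$ is entirely free and linearity cannot be expected; hence the corollary is implicitly about indices with $A_i > 0$.
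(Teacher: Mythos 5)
Your argument is essentially the paper's: strict convexity of $\mu$ makes $\partial s_\Xi(A)$ a singleton (the paper gets this by noting $s_\Xi\in C^1(\R^n_+)$, citing Rockafellar), so $\lambda$ coincides with that single element and is constant because $A$ is a first integral, whence $\theta_i^\polar$ is affine in $t$ when $\gamma\ne0$; and strict convexity of $\Omega_i$ gives a unique angle $\theta_i\xleftrightarrow{\Omega_i}\theta_{i0}^\polar$, so for $A_i>0$, $\gamma=0$ the integrands $\lambda_i\cos_{\Omega_i}\theta_i$, $\lambda_i\sin_{\Omega_i}\theta_i$ are constant. Up to this point the proposal is correct (with the small caveat that ``monotonicity of $\mu$ makes $\lambda_i=0$ the unique maximizer'' should be spelled out: either use strict monotonicity, which follows from strict convexity plus monotonicity as in the paper, or use uniqueness of the maximizer together with the fact that replacing $\lambda_i$ by $0$ produces another maximizer).

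The genuine flaw is your closing treatment of the case $\gamma=0$, $A_i=0$. You assert it is ``excluded by Theorem~\ref{thm:main}'' because $\theta_i(t)$ is free and ``linearity cannot be expected''; under the corollary's hypothesis this is false, and it contradicts what you established in your first paragraph. Theorem~\ref{thm:main} only forces $\lambda_i\equiv0$ for $A_i=0$ when $\gamma\ne0$; the whole point of the strict convexity assumption here is that it forces $\lambda_i\equiv0$ for $A_i=0$ \emph{also when} $\gamma=0$, since the unique element of $\partial s_\Xi(A)$ has $i$-th coordinate zero. Then the arbitrariness of $\theta_i(t)$ is harmless: $x_i=\int_0^t\lambda_i(\tau)\cos_{\Omega_i}\theta_i(\tau)\,d\tau\equiv0$ and likewise $y_i\equiv0$, which are (trivially) linear. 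This is exactly how the paper finishes the proof (and it is the content of Corollary~\ref{cor:mu_strictly_monotone}). So the corollary is not ``implicitly about indices with $A_i>0$''; the $A_i=0$ subcase is part of the claim, and your own observation that $\lambda_i=0$ whenever $A_i=0$ already settles it --- replace the exclusion by that one line and the proof is complete.
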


\begin{proof}
	If $\mu$ is strictly convex, then $s_{\Xi}$ is $C^1(\R^n_+)$ (see~\cite[Theorem~25.1]{Rockafellar}), so for any $A\in\R^n_+$, $\partial s_{\Xi}(A)$ consists of a unique element $s'_\Xi(A)$, which must coincide with $\lambda$. Moreover, since $\mu$ is strictly convex and monotone, it must be strictly monotone. Hence if $A_i=0$ then $\lambda_i=0$ even if $\gamma=0$, since $\partial s_{\Xi}(A) = \{\lambda\in\Xi:\forall\tilde\lambda\in\Xi\ \sum_iA_i\lambda_i \ge \sum_iA_i\tilde\lambda_i\}$. 
	
	Suppose additionally that $\Omega_i$ is strictly convex for some index $i$. If $\gamma=0$ and $A_i>0$, then $x_i$ and $y_i$ are determined by $\theta_i(t)\xleftrightarrow{\Omega_i}\theta_{i0}^\polar$, but there exists a unique angle corresponding to $\theta_{i0}^\polar$ w.r.t. $\Omega_i$. Hence $\theta_i(t)=\const$. If $\gamma=A_i=0$, then $\lambda_i\equiv 0$ as was shown. Hence in the case $\gamma=0$, $x_i$ and $y_i$ are linear.
\end{proof}

So, if $\mu$ is strictly convex, then in the case $\gamma\ne 0$ each pair $(x_i(t),y_i(t))$ satisfies Kepler's law: radius vector $(x_i(t),y_i(t))$ on the plane $Ox_iy_i$ sweeps out equal areas during equal intervals of time by corollary~\ref{cor:stricly_convex}. This law is always fulfilled on $\Hb{3}$ in the case $\gamma\ne 0$ again by corollary~\ref{cor:stricly_convex}.

Note that if $\mu$ is not strictly convex, then $\partial s_\Xi$ still consists of 1 element for a.e.~$A$ (see~\cite[Theorem~25.5]{Rockafellar}), and for those $A$, functions $\lambda_i$ are constants and Kepler's Law is fulfilled in the case~$\gamma\ne 0$.	

\begin{corollary}
\label{cor:mu_strictly_monotone}
	Suppose that function $\mu$ is strictly monotone in $\lambda_i$ for some index $i$. Then if $A_i=0$, then~$x_i\equiv y_i\equiv0$.
\end{corollary}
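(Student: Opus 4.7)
The plan is to split according to the two cases of Theorem~\ref{thm:main}. In the case $\gamma\ne 0$ the conclusion is already recorded inside the theorem statement itself: the explicit formulae for $x_i$ and $y_i$ carry $A_i$ as a prefactor, so $A_i=0$ gives $x_i\equiv y_i\equiv 0$ directly. The real work lies in the case $\gamma=0$, where
\[
  x_i(t)=\int_0^t\lambda_i(\tau)\cos_{\Omega_i}\theta_i(\tau)\,d\tau,
  \qquad
  y_i(t)=\int_0^t\lambda_i(\tau)\sin_{\Omega_i}\theta_i(\tau)\,d\tau.
\]
Since $(\cos_{\Omega_i}\theta_i,\sin_{\Omega_i}\theta_i)\in\partial\Omega_i$ and $0\in\Int\Omega_i$, this pair never vanishes simultaneously, so it is enough to prove $\lambda_i(\tau)=0$ for almost every $\tau$.

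To do this I would use the description $\partial s_\Xi(A)=\{\lambda\in\Xi:\langle A,\lambda\rangle=s_\Xi(A)\}$, i.e.\ the subdifferential consists of the maximizers of $\langle A,\cdot\rangle$ over $\Xi$, exactly as in the proof of Corollary~\ref{cor:stricly_convex}. The hypothesis $\gamma=0$ together with the non-simultaneous-vanishing clause of Theorem~\ref{thm:main} forces $A\ne 0$, so there exists an index $j$ (necessarily distinct from $i$) with $A_j>0$. Suppose toward a contradiction that some $\lambda\in\partial s_\Xi(A)$ had $\lambda_i>0$. Replacing its $i$-th coordinate by $0$ produces $\tilde\lambda\in\R^n_+$, and strict monotonicity of $\mu$ in the $i$-th argument yields $\mu(\tilde\lambda)<\mu(\lambda)\le 1$, so $\tilde\lambda$ lies in the interior of $\Xi$. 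At the same time $\langle A,\tilde\lambda\rangle=\langle A,\lambda\rangle$ because $A_i=0$, so $\tilde\lambda$ is still a maximizer. But from an interior point of $\Xi$ one can perturb to $\tilde\lambda+\varepsilon e_j\in\Xi$ for small $\varepsilon>0$, which strictly raises the inner product with $A$ (since $A_j>0$) — contradicting maximality. Therefore every element of $\partial s_\Xi(A)$ has its $i$-th coordinate equal to $0$, and the measurable selection $\lambda(t)$ coming from Theorem~\ref{thm:main} satisfies $\lambda_i(t)\equiv 0$, which yields $x_i\equiv y_i\equiv 0$.

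I do not anticipate a real obstacle: the argument is a one-step convex-analytic perturbation, entirely parallel to the one used in Corollary~\ref{cor:stricly_convex}. The only point that deserves attention is the use of the non-simultaneous-vanishing condition on $(\gamma,A)$ to produce an index $j$ with $A_j>0$; without this, the perturbation direction $e_j$ needed to derive the contradiction would not be available.
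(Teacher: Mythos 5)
Your proof is correct and follows essentially the same route as the paper: both reduce the claim to showing that every $\lambda\in\partial s_\Xi(A)$ (i.e.\ a maximizer of $\sum_j A_j\lambda_j$ over $\Xi$) has $\lambda_i=0$ when $A_i=0$, which the paper asserts tersely from strict monotonicity of $\mu$ and you justify by the perturbation in the direction $e_j$ with $A_j>0$, the case $\gamma\ne0$ being covered directly by Theorem~\ref{thm:main}. The only cosmetic remark is that $\tilde\lambda$ need not lie in the interior of $\Xi$ (its $i$-th coordinate vanishes); what you actually use, namely $\mu(\tilde\lambda)<1$ so that $\tilde\lambda+\varepsilon e_j\in\Xi$ for small $\varepsilon>0$, is exactly right.
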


\begin{proof}
	Indeed, if $A_i=0$ then $\lambda_i=0$ by $\mu$ strictly monotonicity in $\lambda_i$ as it was shown. Hence $x_i\equiv y_i\equiv0$ even in the case $\gamma=0$.
\end{proof}

Since $\lambda\in\partial s_{\Xi}(A)$ in Theorem~\ref{thm:main}, it is nice to have a convenient way for computation $\partial s_\Xi(A)$.

\begin{remark}
\label{rm:subdiff_is_argmax}
	From the definition of support functions it follows that
	\[
		\partial s_\Xi(A)=\argmax_{\lambda\in\Xi}\sum_{i=1}^n A_i\lambda_i,
	\]
	and hence, $\lambda\in\Xi$. In particular, $\lambda_i\ge 0$.
\end{remark}

Now, we are ready to prove Theorem~\ref{thm:main}.

\begin{proof}[Proof of Theorem~\ref{thm:main}]

First, let us write down the control system in coordinates:
\begin{equation}
\label{eq:five_dim_control_system}
	\dot x_i=\lambda_i u_i;\quad \dot y_i=\lambda_iv_i;\quad 
	\dot z=\frac12\sum_{i=1}^n \lambda_i(x_iv_i - y_iu_i).
\end{equation}
where $i=1,\ldots,n$ and
\[
	(u_i,v_i)\in\partial\Omega_i\qquad\mbox{and}\qquad
	(\lambda_1,\ldots,\lambda_n)\in\Xi
\]
are controls. So we have parametrized $2n$--dimesional set $U$ by $2n$ parameters, since $\partial\Omega_i$ are 1-dimensional sets. Each point in $U$ can be written in the described form (by putting $\lambda_i=\mu_{\Omega_i}(\dot x_i,\dot y_i)$), but if $\dot x_i=\dot y_i=0$ for some $i$, then this form is not unique.

Let us write down the Pontryagin function (Hamiltonian) in the time minimization problem for control system~\eqref{eq:five_dim_control_system}:
\[
	\mathcal{H} = \sum_{i=1}^n\lambda_i\left(\varphi_i u_i + \psi_iv_i + \frac\gamma2 (x_i v_i-y_iu_i)\right)
\]
where $\varphi_i$ are conjugate to $x_i$, $\psi_i$ are conjugate to $y_i$, and $\gamma$ is conjugate to $z$, and all conjugate variables are not allowed to vanish simultaneously. Following traditions of sub--Riemannian geometry, we denote coefficients at control variables by\footnote{Obviously, $h_i$, $g_i$, and $\gamma$ are linear on fibers left--invariant coordinates on $T^*\Hb{2n+1}$.}
\[
	h_i = \varphi_i-\frac12\gamma y_i\quad\mbox{and}\quad g_i = \psi_i + \frac12\gamma x_i,\qquad i=1,\ldots,n.
\]
Then from the Hamiltonian equations for $\mathcal{H}$, we obtain
\begin{equation}
\label{eq:hg_dim_vertical_subsys}
	\dot h_i = -\gamma\lambda_iv_i;\qquad \dot g_i = \gamma\lambda_i u_i;\qquad \dot\gamma=0.
\end{equation}

These equations do not depend on the structure of $U$ and form so called vertical subsystem of PMP. But, according to PMP, optimal control maximize $\mathcal{H}$ for a.e.\ $t$ among all admissible controls:
\begin{equation}
\label{eq:five_dim_control_max}
	\sum_{i=1}^n (h_i\lambda_iu_i+g_i\lambda_iv_i) \to \max_{(u_i,v_i)\in\partial\Omega_i;\ \lambda\in\Xi}
\end{equation}
Solution to this maximization problem highly depends on the structure of set $U$ boundary. We are able to solve equations~\eqref{eq:hg_dim_vertical_subsys}, \eqref{eq:five_dim_control_max} via machinery of convex trigonometry precisely because $U$ satisfies the main assumption. 

Since $(u_i,v_i)\in\partial\Omega_i$, we have
\[
	u_i=\cos_{\Omega_i}\theta_i\qquad\mbox{and}\qquad v_i=\sin_{\Omega_i}\theta_i.
\]
for some $\theta_i$. Put
\[
	h_i = A_i \cos_{\Omega_i^\polar}\theta_i^\polar\qquad\mbox{and}\qquad g_i= A_i\sin_{\Omega_i^\polar}\theta_i^\polar
\]
where $A_i=s_{\Omega_i}(h_i,g_i)\ge 0$ (since $0\in\Omega_i$), and $\theta_i^\polar$ is well defined iff $A_i>0$.

Hence, if $A_i>0$ and $\lambda_i>0$, then from~\eqref{eq:five_dim_control_max}, we have $\theta_i\stackrel{\Omega_i}{\rightarrow}\theta_i^\polar$ by inequality~\eqref{eq:non_pythagorean_inequality} and the generalized Pythagorean identity (see Theorem~\ref{thm:main_trig_thm}). If $\lambda_i=0$, then the choice of the pair $(u_i,v_i)$ does not change $\dot x_i$ and $\dot y_i$, so in this case, we may also assume that if $A_i>0$, then $\theta_i\stackrel{\Omega_i}{\rightarrow}\theta_i^\polar$.

\begin{proposition}
	On any extremal, we have $A=(A_1,\ldots,A_n)=\const$.
\end{proposition}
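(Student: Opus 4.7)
The plan is to compute the bilinear combination $\dot h_i u_i + \dot g_i v_i$ in two different ways and equate them. Substituting the Hamilton equations $\dot h_i = -\gamma\lambda_i v_i$ and $\dot g_i = \gamma\lambda_i u_i$ from the vertical subsystem~\eqref{eq:hg_dim_vertical_subsys} gives $\dot h_i u_i + \dot g_i v_i\equiv 0$, independent of the structure of $\Omega_i$.

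For the second computation I would work on the open set $O_i=\{t:A_i(t)>0\}$, on which the parametrization $h_i = A_i\cos_{\Omega_i^\polar}\theta_i^\polar$, $g_i = A_i\sin_{\Omega_i^\polar}\theta_i^\polar$ is well-defined. Applying Theorem~\ref{thm:polar_change} to the absolutely continuous curve $(h_i,g_i)$ with the convex trigonometry of $\Omega_i^\polar$ (so that the radius is $s_{(\Omega_i^\polar)^\polar}(h_i,g_i) = s_{\Omega_i}(h_i,g_i) = A_i$), one sees that both $A_i$ and $\theta_i^\polar$ are absolutely continuous on $O_i$. Differentiating the parametrization at a.e.\ $t\in O_i$ via Theorem~\ref{thm:derivatives}, using the PMP-forced correspondence $\theta_i\xleftrightarrow{\Omega_i\,}\theta_i^\polar$ to select the branch of the derivative, yields
\[
    \dot h_i = \dot A_i\cos_{\Omega_i^\polar}\theta_i^\polar - A_i\sin_{\Omega_i}\theta_i\cdot\dot\theta_i^\polar,\qquad
    \dot g_i = \dot A_i\sin_{\Omega_i^\polar}\theta_i^\polar + A_i\cos_{\Omega_i}\theta_i\cdot\dot\theta_i^\polar.
\]
Multiplying these by $u_i=\cos_{\Omega_i}\theta_i$ and $v_i=\sin_{\Omega_i}\theta_i$ respectively and adding, the $A_i\dot\theta_i^\polar$ terms cancel, while the coefficient of $\dot A_i$ collapses to $\cos_{\Omega_i^\polar}\theta_i^\polar\cos_{\Omega_i}\theta_i + \sin_{\Omega_i^\polar}\theta_i^\polar\sin_{\Omega_i}\theta_i = 1$ by the main Pythagorean identity~\eqref{eq:main_trig}. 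Comparing with the first computation gives $\dot A_i = 0$ a.e.\ on $O_i$.

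To propagate the constancy from $O_i$ to the whole parameter interval, I would use that $A_i$ is continuous on the full interval (since $s_{\Omega_i}$ is Lipschitz and $(h_i,g_i)$ is absolutely continuous), so $O_i$ is open. On each connected component of $O_i$ the function $A_i$ is a positive constant by the previous step; but continuity then forces $A_i>0$ at the endpoints of such a component, contradicting maximality unless the component coincides with the whole interval. Hence $O_i$ is either empty or the entire interval, and in both cases $A_i$ is constant.

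The one technical point requiring care is the non-differentiability of $\cos_{\Omega_i^\polar}$ and $\sin_{\Omega_i^\polar}$ at corners of $\Omega_i^\polar$: Theorem~\ref{thm:derivatives} then supplies only a set of admissible derivatives. However, the selection $\theta_i\xleftrightarrow{\Omega_i\,}\theta_i^\polar$ is precisely the one picked out by PMP maximization, so the almost-everywhere chain-rule computation above remains valid. This is the main (mild) obstacle; the rest is mechanical.
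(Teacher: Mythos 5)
Your argument is correct and essentially the paper's own: you differentiate the polar parametrization $h_i=A_i\cos_{\Omega_i^\polar}\theta_i^\polar$, $g_i=A_i\sin_{\Omega_i^\polar}\theta_i^\polar$, pair with $(u_i,v_i)$ so that the Pythagorean identity yields $\dot A_i=\dot h_iu_i+\dot g_iv_i$, which vanishes by the vertical subsystem~\eqref{eq:hg_dim_vertical_subsys}, and then propagate constancy from the open set $\{t:A_i(t)>0\}$ by continuity, exactly as in the paper. The only cosmetic difference is that the paper first computes $\dot\theta_i^\polar=\gamma\lambda_i/A_i$ and substitutes it, whereas you note that the $A_i\dot\theta_i^\polar$ terms cancel outright, which is a slight streamlining rather than a different route.
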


\begin{proof}

	Fix an index $i$. First, suppose that $A_i(t_0)>0$ at some instant $t_0$. Then in a neighborhood of $t_0$, using Theorem~\ref{thm:polar_change}, we obtain
	\begin{equation}
	\label{eq:dot_theta_i}
		\dot \theta_i^\polar = \frac{h_i\dot g_i - \dot h_i g_i}{A_i^2} = 
		\frac{A_i\cos_{\Omega_i^\polar}\theta_i^\polar\gamma\lambda_i\cos_{\Omega_i}\theta_i + A_i\sin_{\Omega_i^\polar}\theta_i^\polar\gamma\lambda_i\sin_{\Omega_i}\theta_i}{A_i^2} = 
		\frac{\gamma\lambda_i}{A_i}.
	\end{equation}
	The last equation holds by the generalized Pythagorean identity (see Theorem~\ref{thm:main_trig_thm}). Thus, using formulae for $\cos_\Omega$ and $\sin_\Omega$ derivatives (see Theorem~\ref{thm:derivatives}), we obtain
	\[
		\left\{\begin{array}{l}
			\dot h_i = \dot A_i \cos_{\Omega_i^\polar}\theta_i^\polar - \gamma\lambda_i \sin_{\Omega_i}\theta_i;\\
			\dot g_i = \dot A_i \sin_{\Omega_i^\polar}\theta_i^\polar + \gamma\lambda_i \cos_{\Omega_i}\theta_i.\\
		\end{array}\right.
	\]
	Hence, using~\eqref{eq:hg_dim_vertical_subsys}, we obtain
	\[
		\dot A_i = \dot h_i u_i + \dot g_i v_i = 0.
	\]
	
	So, we have proved, that function $A_i(t)$ is locally constant on the open set $\{t:A_i(t)>0\}$. Since $A_i$ is continuous, then it must be constant for all t.
	
\end{proof}

\begin{corollary}
	If $A_i>0$, then $\theta_i^\polar(t) = \theta_{i0}^\polar+\frac{\gamma}{A_i}\int_0^t\lambda_i(\tau)\,d\tau$ for some constant $\theta_i^\polar$.
\end{corollary}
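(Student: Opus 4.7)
The plan is to combine the preceding proposition, which asserts that $A_i$ is constant on an extremal, with the differential identity~\eqref{eq:dot_theta_i} already derived in its proof, and then integrate. Concretely, the proof of the preceding proposition showed, in any neighborhood of an instant where $A_i$ does not vanish, that
\[
\dot\theta_i^\polar(t) \;=\; \frac{\gamma\lambda_i(t)}{A_i(t)}
\quad\text{for a.e.\ } t.
\]
Under the hypothesis $A_i>0$, the constancy of $A_i$ (already established) upgrades this to an identity $\dot\theta_i^\polar(t) = \gamma\lambda_i(t)/A_i$ valid on the entire time interval $[0,T]$.

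Next I would justify that $\theta_i^\polar(t)$ is absolutely continuous, so that the fundamental theorem of calculus applies. The curve $(h_i(t),g_i(t))$ is absolutely continuous (since $h_i,g_i$ satisfy the Lipschitz ODE~\eqref{eq:hg_dim_vertical_subsys} with bounded right-hand side, because $\lambda_i$ is bounded and $(u_i,v_i)\in\partial\Omega_i$ is bounded), and by $A_i = s_{\Omega_i}(h_i,g_i) = \mathrm{const} > 0$ together with $0\in\Int\Omega_i$, this curve stays uniformly away from the origin. Hence Theorem~\ref{thm:polar_change} applies and gives that $\theta_i^\polar(t)$ is absolutely continuous (with the usual freedom modulo $2\mathbb{S}^\polar_{\Omega_i}\Z$, fixed by any continuous lift).

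Setting $\theta_{i0}^\polar := \theta_i^\polar(0)$ and integrating the derivative identity between $0$ and $t$ then yields
\[
\theta_i^\polar(t) \;=\; \theta_{i0}^\polar + \int_0^t \dot\theta_i^\polar(\tau)\,d\tau
\;=\; \theta_{i0}^\polar + \frac{\gamma}{A_i}\int_0^t \lambda_i(\tau)\,d\tau,
\]
which is precisely the claim (reading the constant in the statement as $\theta_{i0}^\polar$). There is no real obstacle here; this corollary is essentially a one-line consequence of the preceding proposition plus formula~\eqref{eq:dot_theta_i}, the only technical point being the appeal to Theorem~\ref{thm:polar_change} to guarantee absolute continuity of the angular coordinate so that the integral representation is legitimate.
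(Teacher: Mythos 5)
Your proof is correct and follows essentially the same route as the paper, which simply notes that the corollary follows immediately from~\eqref{eq:dot_theta_i} together with the constancy of $A_i$ and $\gamma$. The extra care you take in invoking Theorem~\ref{thm:polar_change} to guarantee absolute continuity of $\theta_i^\polar(t)$ is already implicit in the paper's derivation of~\eqref{eq:dot_theta_i}, so you are just making the integration step explicit.
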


\begin{proof}
	This follows immediately from~\eqref{eq:dot_theta_i}, since $A_i$ and $\gamma$ are constants.
\end{proof}

\begin{corollary}
	Constants $\gamma$ and $A$ do not vanish simultaneously.
\end{corollary}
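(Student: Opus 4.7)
The plan is to argue by contradiction, using non-triviality of the PMP covector together with the fact that the sets $\Omega_i$ contain the origin in their interior. Suppose for the sake of contradiction that $\gamma=0$ and $A_1=\cdots=A_n=0$ simultaneously on some extremal. I want to derive that then every conjugate variable $\varphi_i,\psi_i$ must also vanish, violating the PMP requirement that the adjoint covector (together with $\gamma$) be non-zero.

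First I would note that, since $0\in\Int\Omega_i$, there is a ball $B_r(0)\subset\Omega_i$, and consequently
\[
    s_{\Omega_i}(h,g)=\sup_{(u,v)\in\Omega_i}(hu+gv)\ge r\sqrt{h^2+g^2}.
\]
Therefore $A_i=s_{\Omega_i}(h_i,g_i)=0$ forces $h_i=g_i=0$. This is the key observation linking the ``generalized radii'' $A_i$ back to the underlying conjugate variables.

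Next, I would plug $\gamma=0$ together with $h_i=g_i=0$ into the definitions $h_i=\varphi_i-\tfrac12\gamma y_i$ and $g_i=\psi_i+\tfrac12\gamma x_i$. The $\gamma$ terms disappear, so $\varphi_i=0$ and $\psi_i=0$ for every $i$. Combined with $\gamma=0$, this means the entire PMP covector $(\varphi,\psi,\gamma)$ is identically zero, contradicting the non-triviality clause in PMP (which was already invoked when the Pontryagin function $\mathcal{H}$ was introduced).

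The only non-routine part of the argument is the passage from $A_i=0$ to $(h_i,g_i)=0$, and that rests purely on the main-assumption consequence that each $\Omega_i$ has the origin as an interior point (proved in Proposition~1 of the paper for $U$, and assumed for the $\Omega_i$ in the main assumption itself). After that the corollary is essentially immediate, so I do not anticipate any real obstacle beyond carefully invoking the PMP non-triviality condition.
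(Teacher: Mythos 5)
Your proposal is correct and follows essentially the same route as the paper: from $\gamma=0$ and $A_i=s_{\Omega_i}(h_i,g_i)=0$ one gets $h_i\equiv g_i\equiv 0$, hence $\varphi_i\equiv\psi_i\equiv 0$, contradicting the non-triviality of the PMP covector. Your explicit justification that $A_i=0$ forces $(h_i,g_i)=0$ via $0\in\Int\Omega_i$ is a detail the paper leaves implicit, but the argument is the same.
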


\begin{proof}
	Indeed, if $\gamma=A_i=0$ for all $i$, then $h_i\equiv g_i\equiv 0$. Hence $\varphi_i\equiv\psi_i\equiv0$. So, all Lagrange multipliers $\varphi_i$, $\psi_i$, and $\gamma$ vanish simultaneously, which is forbidden by PMP.
\end{proof}

So, on any extremal, we have
\[
	h_iu_i + g_iv_i \equiv A_i = \const.
\]
Hence, $\lambda$ at any instant $t$ is a solution to the following time-independent maximization problem
\begin{equation}
\label{eq:max_A_i_lambda_i}
	\sum_{i=1}^n A_i\lambda_i \to \max_{\lambda\in\Xi}.
\end{equation}
Note that $\lambda$ is a solution to~\eqref{eq:max_A_i_lambda_i} iff $\lambda\in\partial s_\Xi(A)$ (see Remark~\ref{rm:subdiff_is_argmax}). Since $A_i\ge 0$, maximum in~\eqref{eq:max_A_i_lambda_i} is always attained in a point on the convex surface $\partial\Xi_+=\{\lambda_i\ge 0:\mu(\lambda)=1\}\subset\partial\Xi$.

Since $A_i$ are constants, if function $\mu$ is strictly convex, then $\lambda$ is also constant. If function $\mu$ is not strictly convex, then $A$ may define a support hyperplane to $\Xi$ at a face $F\subset\partial\Xi_+$. In this case, $\lambda(t)\in F$ is an arbitrary measurable function. This phenomenon does not appear in the smallest dimension $n=1$ (since if $n=1$, then $\dim F=0$). Surprisingly, even if $\dim F\ne 0$, we are able to completely integrate equations on $x$, $y$, and $z$ in the case $\gamma\ne0$ despite arbitrariness in the choice of $\lambda(t)$.

First, consider the simplest case $\gamma=0$.

\begin{enumerate}
	\item\label{item:zn} Let $A_i>0$. In this case, $\dot\theta_i^\polar\equiv 0$. Hence, choosing an arbitrary measurable function $\theta_i(t)\xleftrightarrow{\Omega_i}\theta_i^\polar$ we obtain admissible controls $u_i(t)=\cos_{\Omega_i}\theta_i(t)$ and $v_i(t)=\sin_{\Omega_i}\theta_i(t)$. 
	
	\item\label{item:zz} Let $A_i=0$. In this case $(u_i(t),v_i(t))\in\partial\Omega_i$ can be chosen arbitrary.
\end{enumerate}

So we have proved item~\ref{item:gamma_not_null} of the theorem. Moreover, any above constructed trajectory with $\gamma=0$ is an extremal with $h_i=A_i\cos_{\Omega_i^\polar}\theta_i^\polar=\const$ and $g_i=A_i\sin_{\Omega_i^\polar}\theta_i^\polar=\const$.

Now consider the interesting case $\gamma\ne 0$. Let us now find $x_i$ and $y_i$ explicitly.

\begin{enumerate}
	\item\label{item:nn} Let $A_i>0$. In this case, $\lambda_i=A_i\dot\theta_i^\polar/\gamma$. Therefore,
	\[
		\dot x_i = \lambda_i\cos_{\Omega_i}\theta_i = \frac{A_i}{\gamma}\dot\theta_i^\polar\cos_{\Omega_i}\theta_i =
		\frac{d}{dt}\left(\frac{A_i}{\gamma}\sin_{\Omega_i^\polar}\theta_i^\polar\right),
	\]
	and $x_i=x_i^0 + \frac{A_i}{\gamma}\sin_{\Omega_i^\polar}\theta_i^\polar$, where $x_i^0$ is a constant. Similarly, $y_i=y_i^0 - \frac{A_i}{\gamma}\cos_{\Omega_i^\polar}\theta_i^\polar$. Using initial conditions $x_i(0)=y_i(0)=0$, we get $x_i^0=-\frac{A_i}{\gamma}\sin_{\Omega_i^\polar}\theta_{i0}^\polar$ and $y_i^0=\frac{A_i}{\gamma}\cos_{\Omega_i^\polar}\theta_{i0}^\polar$. Without loss of generality, $\gamma=\pm1$, since Lagrange multipliers are defined up to multiplication by a positive constant. Hence $\gamma=1/\gamma$, and we have obtained formulae for $x_i$ and $y_i$ in item~\ref{item:gamma_not_null} of the theorem for the case $A_i>0$.
	
	\item\label{item:nz} Let $A_i=0$. In this case, $\theta_i^\polar$ is not well defined, but $h_i\equiv g_i\equiv0$. Since $\dot h_i=-\gamma\lambda_iu_i$ and $\dot g_i=\gamma\lambda_iv_i$, we obtain $\lambda_i\equiv0$. Hence, $\dot x_i\equiv\dot y_i\equiv0$, and $x_i\equiv y_i\equiv 0$, since $x_i(0)=y_i(0)=0$.
\end{enumerate}

In the case $\gamma\ne 0$, we are also able to find $z$ explicitly. Indeed, if $A_i>0$ then
\begin{multline*}
	\lambda_i(x_iv_i - y_iu_i) = 
		\lambda_i\sin_{\Omega_i}\theta_i (x_i^0+\frac{A_i}{\gamma}\sin_{\Omega_i^\polar}\theta_i^\polar) -
		\lambda_i\cos_{\Omega_i}\theta_i (y_i^0-\frac{A_i}{\gamma}\cos_{\Omega_i^\polar}\theta_i^\polar) =\\
	=\frac{A_i}{\gamma}\lambda_i + x_i^0\lambda_iv_i - y_i^0\lambda_iu_i =
	\frac{d}{dt}\left(\frac{A_i^2}{\gamma^2}\theta_i^\polar + x_i^0y_i - y_i^0x_i\right).
\end{multline*}

Hence, if $\gamma\ne 0$, we have
\[
	z=z^0 +	\frac12\sum_{i:A_i>0}\frac{A_i}{\gamma}\left(
			\frac{A_i}{\gamma}\theta_i^\polar - 
			y_i\sin_{\Omega_i^\polar}\theta_{i0}^\polar  -
			x_i\cos_{\Omega_i^\polar}\theta_{i0}
		\right).
\]
where $z^0=-\frac{1}{2\gamma^2}\sum_{i:A_i>0}A_i^2\theta_{i0}^\polar$, since $x_i(0)=y_i(0)=z(0)=0$.

Let us compute the following term in $z$:
\[
	\frac12\sum_{i:A_i>0}\frac{A_i^2}{\gamma^2}(\theta_i^\polar-\theta_{i0}^\polar) =
	\frac12\sum_{i=1}^n\frac{A_i}{\gamma}\int_0^t\lambda_i(\tau)\,d\tau = 
	\frac1{2\gamma}\int_0^t \sum_{i=1}^n A_i\lambda_i(\tau)\,d\tau = 
	\frac1{2\gamma}\int_0^t s_\Xi(A)\,dt = \frac{1}{2\gamma}s_\Xi(A)t,
\]
since $A=\const$. Putting $\gamma=\pm 1$, we prove item~\ref{item:gamma_not_null} of the theorem. 

Moreover, any above constructed trajectory with $\gamma\ne 0$ is an extremal with $h_i=A_i\cos_{\Omega_i^\polar}\theta_i^\polar$ and $g_i=A_i\sin_{\Omega_i^\polar}\theta_i^\polar$. Let us prove this. If $A_i>0$, then $\dot h_i = A_i\dot\theta_i^\polar\sin_{\Omega_i}\theta_i=\gamma\lambda_iu_i$ for a.e.\ $t$, since if $\dot\theta_i^\polar(t_0)\ne 0$ for some $t_0$, then for a.e.~$t$ in a neighborhood of $t_0$, there exists a unique $\theta(t)\xleftrightarrow{\Omega_i}\theta_i^\polar(t)$; and if $\dot\theta_i^\polar(t_0)=0$, then $\lambda_i(t_0)=0$. If $A_i=0$, then $h_i\equiv 0$ and $\gamma\lambda_iu_i=0$, since $\lambda_i$ was chosen be be null in the case $A_i=0$. Hence $h_i$ satisfies the Hamiltonian equations. Similarly, $g_i$ satisfies the Hamiltonian equations. Functions $x_i$, $y_i$, and $z$ satisfy control system and the initial conditions by construction. It remains to say, that $\lambda$ was chosen to maximize $\mathcal{H}=\sum_i A_i\lambda_i$, and pair $(u_i,v_i)\in\partial\Omega_i$ was chosen to maximize $h_iu_i+g_iv_i$, and $A_i=\max_{(u_i,v_i)\in\partial\Omega_i}(h_iu_i+g_iv_i)$. Q.E.D.

\end{proof}

\section{Case \texorpdfstring{$L_p$}{Lp} for \texorpdfstring{$1<p<\infty$}{1<p<∞}}
\label{sec:lp}

In this section, we demonstrate exact formulae in the case $U=\{(\dot x,\dot y)\in\R^n\times\R^{n*}:|\dot x|^p + |\dot y|^p\le 1\}\subset\R^n\times\R^{n*}$ where $1<p<\infty$ in terms of incomplete Euler integral of the first kind (which can be expressed in terms of hypergeometric function $_2F_1$). The cases $p=1$ and $p=\infty$ are considered in Sec.~\ref{sec:convex_hull_and_direct_product}. The plane case $n=1$ is the most important one. Indeed, if we put $\Omega=\{(u,v)\in\R^2:|u|^p+|v|^p\le1\}\subset\R^2$ and $\mu(\lambda_1,\ldots,\lambda_n)=(\sum_i\lambda_i^p)^{1/p}$, then the main assumption is fulfilled, since
\[
	U=\left\{(\dot x,\dot y)\in\R^n\times\R^{n*}:\mu(\mu_\Omega(\dot x_1,\dot y_1),\ldots,\mu_\Omega(\dot x_n,\dot y_n))\le 1\right\}.
\]
Hence, solutions to this case can be written in term of $\cos_\Omega$ and $\sin_\Omega$ by Theorem~\ref{thm:main}.

We start with computing functions of convex trigonometry for $\Omega$ being the unit ball in $L_p$ metric. In this case, polar set $\Omega^\polar$ is the unit ball in $L_q$ metric, where $pq=p+q$ (see Fig.~\ref{fig:lp_lp_balls}). Let us parametrize $\partial\Omega$ in the following way
\[
	\partial\Omega = \{(u,v):\ u=|\cos\varphi|^{2/p}\sgn{\cos\varphi},\ v=|\sin\varphi|^{2/p}\sgn{\sin\varphi},\quad \varphi\in\R\}.
\]

\begin{figure}[ht]
	\centering
	\begin{subfigure}{0.3\textwidth}
		\centering
		\includegraphics[width=\textwidth]{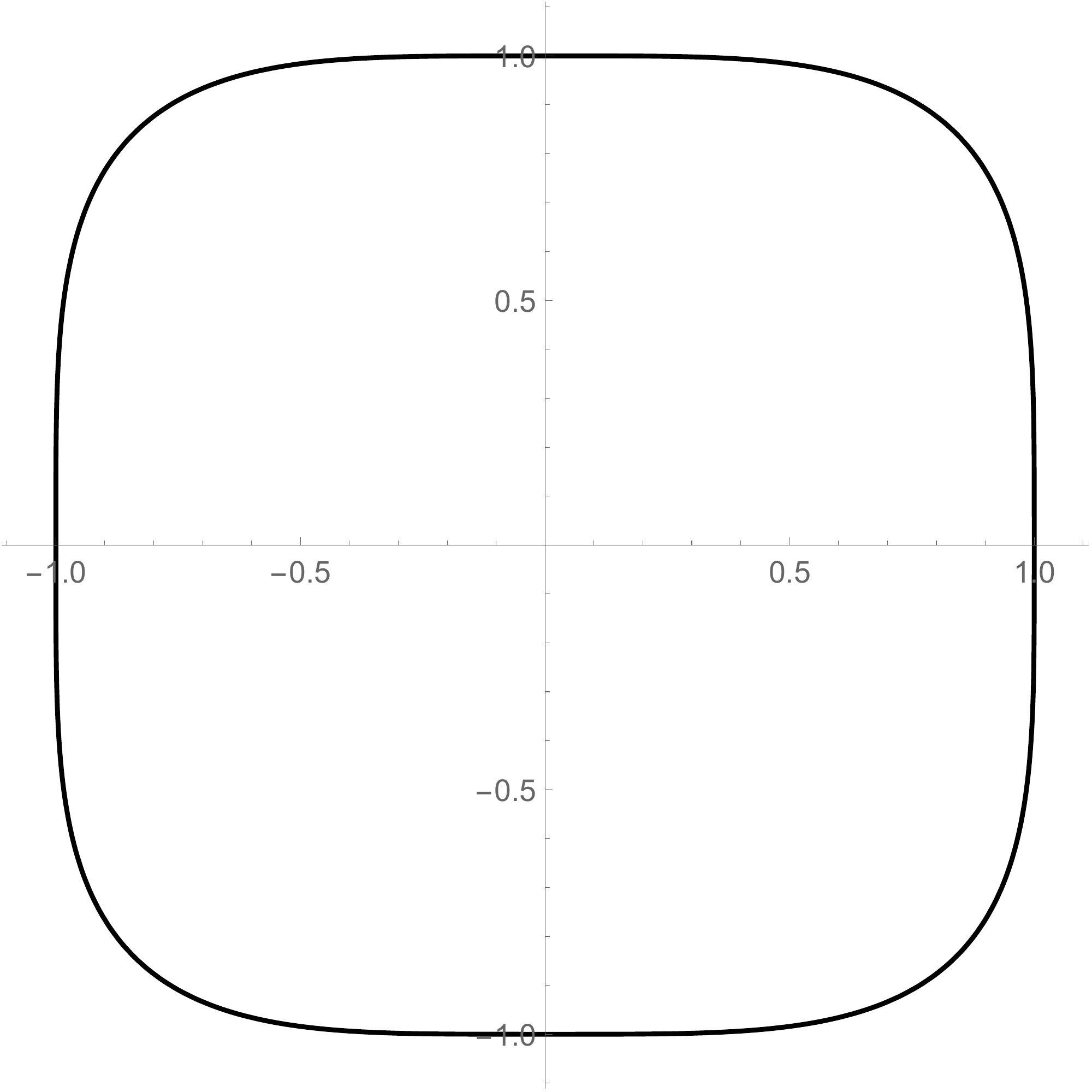}
	\end{subfigure}
	\ \ \ \ \ 
	\begin{subfigure}{0.3\textwidth}
		\centering
		\includegraphics[width=\textwidth]{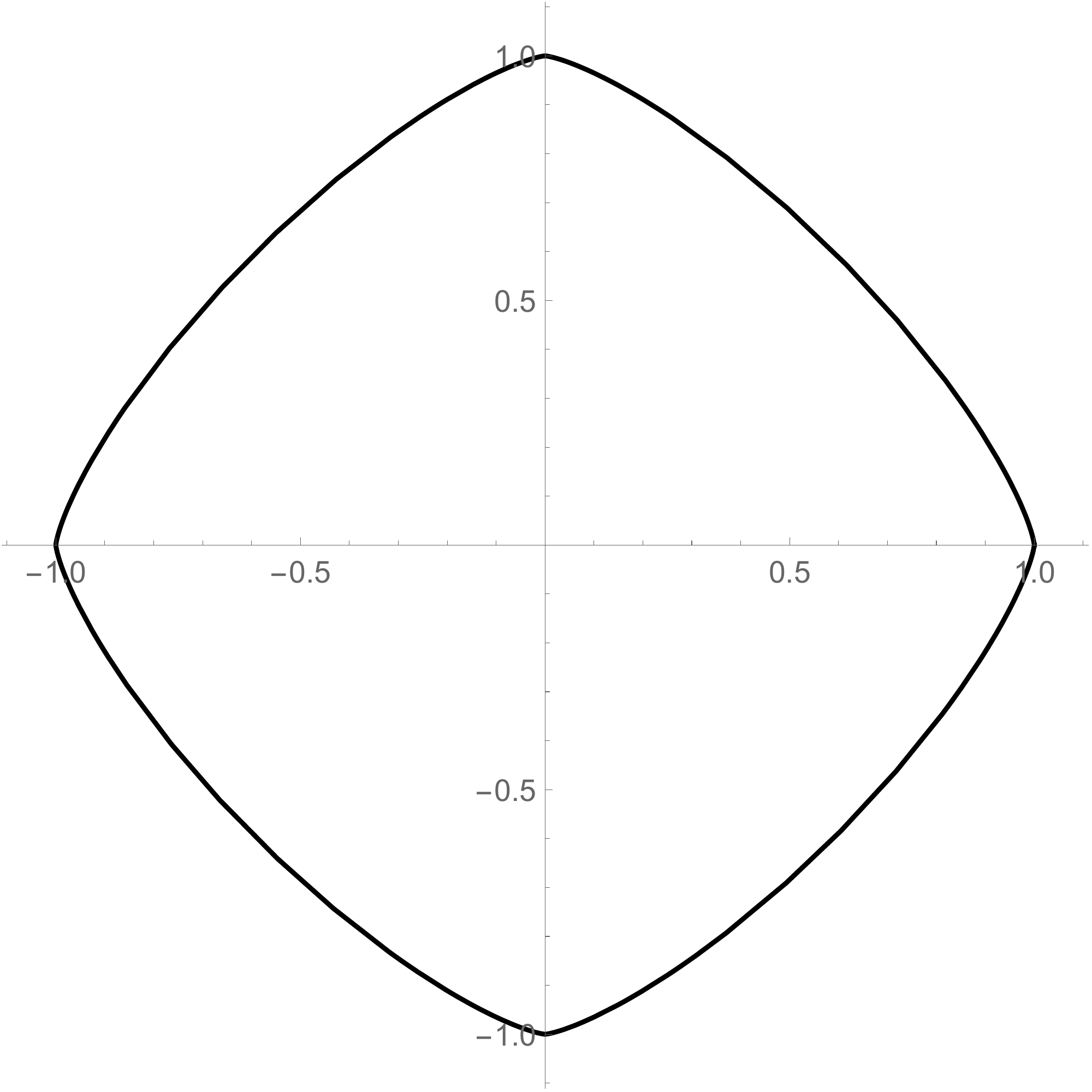}
	\end{subfigure}
	\caption{Sets $\Omega$ (on the left) and $\Omega^\polar$ for $p=4$ and $q=4/3$.}
	\label{fig:lp_lp_balls}
\end{figure}

We will make all computations, assuming that $\varphi\in[0;\pi/2]$, since our results can be easily extended for other intervals $[\pi k/2;\pi(k+1)/2]$, $k\in\Z$.

Let $\theta(\varphi)$ denote the generalized angle corresponding to point $(u(\varphi),v(\varphi))$. Using Theorem~\ref{thm:polar_change}, for $\varphi\in[0;\pi/2]$, we obtain 
\[
	\theta'_{\varphi} = uv'_\varphi - u'_\varphi v = \tfrac1p 4^{\frac1q} \sin^{(\frac1p - \frac1q)}2\varphi.
\]
Function $\theta(\varphi)=\int \theta'_\varphi\,d\varphi$ can be found by substitution $t=\sin^22\varphi$ in terms of incomplete Euler integral of the first kind (beta function) $B(x;a,b)=\int_0^xt^{a-1}(1-t)^{b-1}\,dt$ with $a=\frac12$ and $b=\frac1p$, which can be expressed in terms of hyperheometric function: $B(x;a,b)=\frac1a x^a\,_2F_1(a,1-b;a+1;x)$. Indeed, for $\varphi\in[0;\frac\pi4]$, we have
\[
	\int\sin^{(\frac1p - \frac1q)}2\varphi\,d\varphi = 
	\tfrac14\int(\sin^22\varphi)^{\frac1p-1}(1-\sin^22\varphi)^{-\frac12}\,d(\sin^22\varphi)=
	\tfrac14 B(\sin^22\varphi;\tfrac1p,\tfrac12)+\const.
\]
Using $B(0;a,b)=0$, we obtain
\[
	\theta=\tfrac{1}{p}4^{-\tfrac{1}{p}}B\left(\sin^2 2\varphi;\tfrac1p,\tfrac12\right)\quad\mbox{for}\quad\varphi\in[0;\tfrac\pi4].
\]
Since $\theta$ is the doubled area of the corresponding sector of $\Omega$, the total area $\mathbb{S}$ of $\Omega$ can be express  by $\Gamma$--function:
\begin{equation}
\label{eq:area_lp}
	\mathbb{S} = 4\theta(\pi/4) = \frac{1}{p}4^{\frac{1}{q}}\frac{\Gamma\left(\frac{1}{p}\right)\Gamma(\frac12)}{\Gamma \left(\frac{1}{2}+\frac{1}{p}\right)} =
	\frac{4\Gamma^2\left(1+\frac1p\right)}{\Gamma\left(1+\frac2p\right)}.
\end{equation}
Here we used $B(1;a,b)=\Gamma(a)\Gamma(b)/\Gamma(a+b)$, $\Gamma(\frac12)=\sqrt{\pi}$, and Legendre duplication formula $\Gamma(x)\Gamma(x+\frac12)=2^{1-2x}\sqrt{\pi}\Gamma(2x)$.

Obviously, $\theta(\tfrac\pi2-\varphi)=\tfrac{\mathbb{S}}2-\varphi$ and $\theta(\varphi+\pi k/2)=\theta(\varphi)+\mathbb{S}k/2$ for $k\in\Z$. Hence, for all $\varphi\in\R$, we have
\begin{equation}
\label{eq:theta_lp}
	\theta = \frac{k-[k/2]}{2}\mathbb{S} + (-1)^{k}\tfrac{1}{p}4^{-\tfrac{1}{p}}B\left(\sin^2 2\varphi;\tfrac1p,\tfrac12\right)
	\qquad\mbox{where}\qquad k=\left[\frac{4\varphi}{\pi}\right]=\left[\frac{4\theta}{\mathbb{S}}\right]\in\Z.
\end{equation}
Function in the right hand side is strictly monotone (since its derivative is positive), which allows us to give the following

\begin{definition}
	Put (see Fig.~\ref{fig:cos_sin_lp})
	\[
		\cos_p\theta =|\cos\varphi_p(\theta)|^{2/p}\sgn{\cos\varphi_p(\theta)};\quad
		\sin_p\theta =|\sin\varphi_p(\theta)|^{2/p}\sgn{\sin\varphi_p(\theta)}
	\]
	where $\varphi_p(\theta)$ is a unique solution to~\eqref{eq:theta_lp}.
\end{definition}

\begin{figure}[ht]
	\centering
	\begin{subfigure}{0.45\textwidth}
		\centering
		\includegraphics[width=\textwidth]{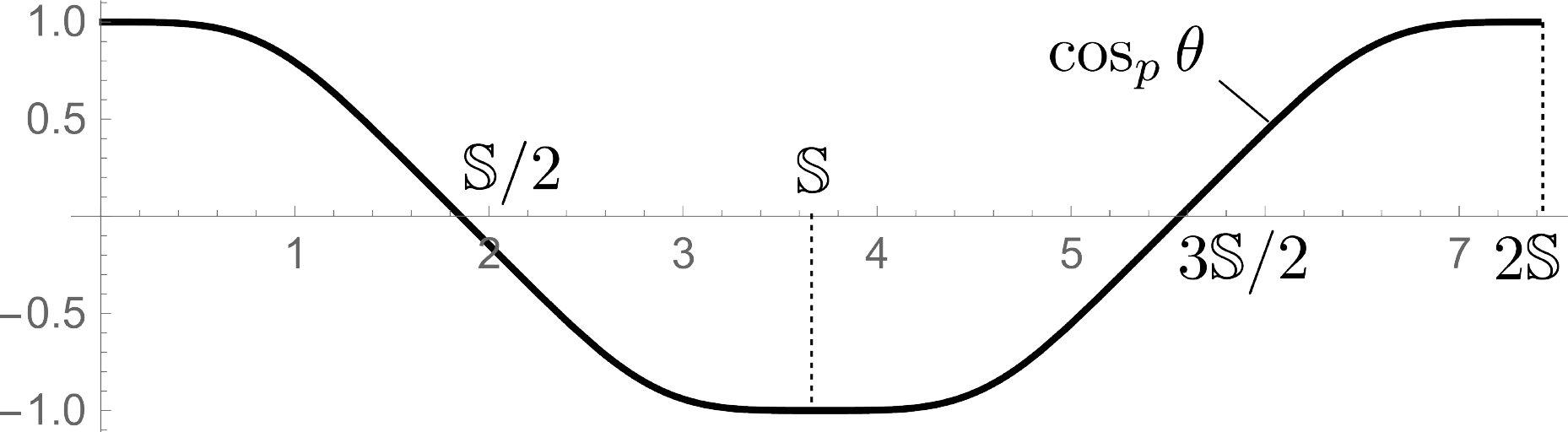}
	\end{subfigure}
	\begin{subfigure}{0.45\textwidth}
		\centering
		\includegraphics[width=\textwidth]{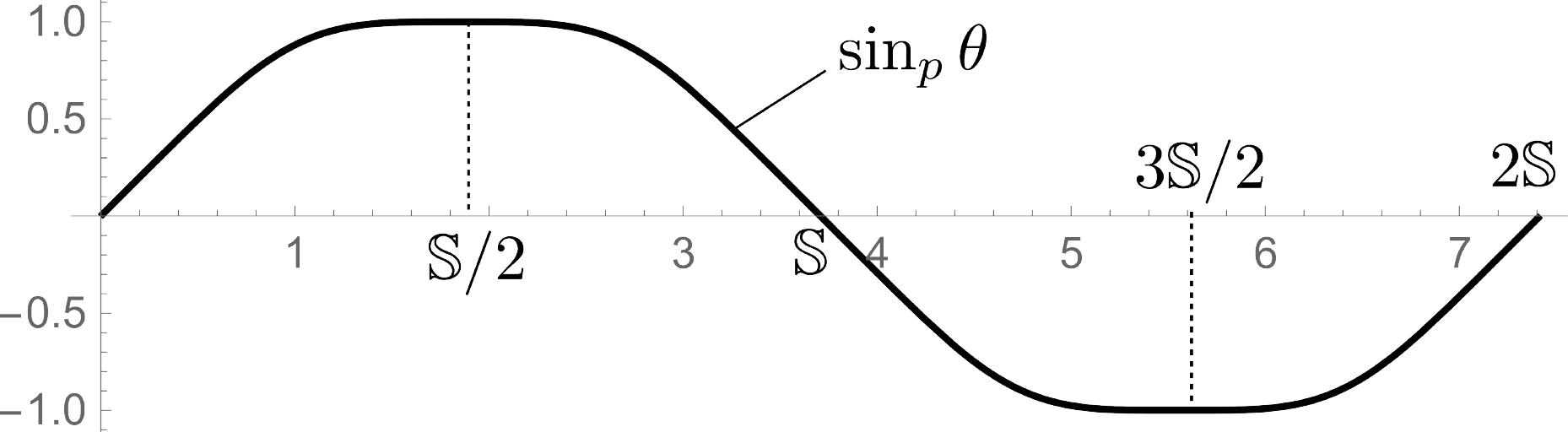}
	\end{subfigure}
	\caption{Graphs of $\cos_\Omega$ and $\sin_\Omega$ for $p=4$.}
	\label{fig:cos_sin_lp}
\end{figure}

Now, we pass to the polar set $\Omega^\polar$, which is the unit ball on $\R^2$ in $L_q$ metric. Let $\theta\xleftrightarrow{\Omega\,}\theta^\polar$. For $\varphi\in[0;\pi/2]$, we obtain
\[
	\cos_{\Omega^\polar}\theta^\polar = \frac{d}{d\theta}\sin_{\Omega}\theta = \frac{1}{\theta'_\varphi}\frac{d}{d\varphi}\sin^{2/p}\varphi = \cos ^{2/q}\varphi;
\]
\[
	\sin_{\Omega^\polar}\theta^\polar = -\frac{d}{d\theta}\cos_{\Omega}\theta = \frac{-1}{\theta'_\varphi}\frac{d}{d\varphi}\cos^{2/p}\varphi = \sin ^{2/q}\varphi.
\]
Hence, convex trigonometry formulae for $\Omega^\polar$ are completely similar to those for $\Omega$ with the same $\varphi$ and can be obtained by substitution $p\leftrightarrow q$ (see Fig.~\ref{fig:cos_sin_lq}):
\begin{equation}
\label{eq:theta_lq}
	\theta^\polar = \frac{k-[k/2]}{2}\mathbb{S^\polar} + (-1)^{k}\tfrac{1}{q}4^{-\tfrac{1}{q}}B\left(\sin^2 2\varphi;\tfrac1q,\tfrac12\right)
\qquad\mbox{where}\qquad k=\left[\frac{4\varphi}{\pi}\right]=\left[\frac{4\theta^\polar}{\mathbb{S^\polar}}\right]\in\Z;
\end{equation}
\begin{equation}
\label{eq:area_lq}
	\mathbb{S}^\polar=\frac{4\Gamma^2\left(1+\frac1q\right)}{\Gamma\left(1+\frac2q\right)}.
\end{equation}

So, we have proved the following

\begin{theorem}
\label{thm:cos_sin_lp}
	Let $\Omega=\{(u,v):|u|^p+|v|^p\le1\}\subset \R^2$ with $1<p<\infty$, and $pq=p+q$. Then
	\[
		\cos_\Omega\theta=\cos_p\theta\qquad\text{and}\qquad
		\sin_\Omega\theta=\sin_p\theta.
	\]
	The angle $\theta^\polar$ corresponding to $\theta$ is unique (up to the period $2\mathbb{S}^\polar$) and is given as a unique solution to the equation $\varphi_q(\theta^\polar)=\varphi_p(\theta)$ (which is equivalent to pair~\eqref{eq:theta_lp} and~\eqref{eq:theta_lq}), and
	\[
		\cos_{\Omega^\polar}\theta^\polar=\cos_q\theta^\polar\qquad\text{and}\qquad
		\sin_{\Omega^\polar}\theta^\polar=\sin_q\theta^\polar.
	\]
	
\end{theorem}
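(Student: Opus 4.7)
The proof is essentially a consolidation of the explicit parametric computations already performed in the text above the statement. Since $1<p<\infty$, the set $\Omega$ is strictly convex with $C^1$ boundary, so by Theorem~\ref{thm:derivatives} the correspondence $\theta\xleftrightarrow{\Omega\,}\theta^\polar$ is a bijection (modulo the two periods $2\mathbb{S}$ and $2\mathbb{S}^\polar$), and the argument splits into verifying the three assertions in turn.

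First I would parametrize $\partial\Omega$ by $\varphi\mapsto(u(\varphi),v(\varphi)) = (|\cos\varphi|^{2/p}\sgn\cos\varphi,\,|\sin\varphi|^{2/p}\sgn\sin\varphi)$ and apply Theorem~\ref{thm:polar_change} to this absolutely continuous curve. On $[0,\pi/2]$ this yields $\theta'_\varphi = uv'_\varphi - u'_\varphi v = \tfrac1p 4^{1/q}\sin^{1/p-1/q}2\varphi>0$, so $\theta$ is strictly increasing in $\varphi$. The substitution $t=\sin^22\varphi$ converts $\int\theta'_\varphi\,d\varphi$ into a beta integral and produces \eqref{eq:theta_lp} on $[0,\pi/4]$; the reflection symmetry $(u,v)\leftrightarrow(v,u)$ of $\Omega$ extends the formula to $[0,\pi/2]$, and the $\pi/2$-rotational symmetry extends it to all of $\R$. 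Formula \eqref{eq:area_lp} is then $4\theta(\pi/4)$ via $B(1;1/p,1/2)=\Gamma(1/p)\sqrt\pi/\Gamma(1/p+1/2)$ and Legendre's duplication formula. Strict monotonicity of $\theta(\varphi)$ provides a well-defined inverse $\varphi_p(\theta)$, and the identifications $\cos_\Omega\theta=\cos_p\theta$, $\sin_\Omega\theta=\sin_p\theta$ follow by definition.

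Next I would invoke the standard fact (an immediate consequence of H\"older's inequality and its equality case, using $pq=p+q$) that $\Omega^\polar$ is the unit $L_q$ ball in $\R^2$. Replaying the previous step verbatim with $p$ replaced by $q$ gives \eqref{eq:theta_lq}, \eqref{eq:area_lq}, and the identifications $\cos_{\Omega^\polar}\psi=\cos_q\psi$, $\sin_{\Omega^\polar}\psi=\sin_q\psi$ for all $\psi\in\R$.

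The substantive step, which I expect to be the main obstacle, is identifying the correspondence $\theta\xleftrightarrow{\Omega\,}\theta^\polar$ with the simple parameter relation $\varphi_q(\theta^\polar)=\varphi_p(\theta)$. I would use Theorem~\ref{thm:derivatives}: the correspondence is uniquely characterised by $\cos'_\Omega\theta=-\sin_{\Omega^\polar}\theta^\polar$ and $\sin'_\Omega\theta=\cos_{\Omega^\polar}\theta^\polar$. Computing by the chain rule, $\sin'_\Omega\theta = v'_\varphi/\theta'_\varphi = \cos^{2/q}\varphi$ and $\cos'_\Omega\theta = u'_\varphi/\theta'_\varphi = -\sin^{2/q}\varphi$ at $\varphi=\varphi_p(\theta)\in(0,\pi/2)$, where I have used the crucial algebraic identity $2(p-1)/p = 2/q$ that follows from $\tfrac1p+\tfrac1q=1$. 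Comparing with the parametrization of $\partial\Omega^\polar$ from the second step forces $\varphi_q(\theta^\polar) = \varphi_p(\theta)$ on this open quadrant; sign-and-period bookkeeping across the four quadrants (handled uniformly by the same two symmetry identities used in the first step) then extends the equality to all $\theta\in\R$. Uniqueness of $\theta^\polar$ modulo $2\mathbb{S}^\polar$ is automatic from strict convexity of $\Omega$, completing the proof.
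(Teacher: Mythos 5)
Your proposal is correct and follows essentially the same route as the paper: parametrize $\partial\Omega$ by $\varphi$, compute $\theta(\varphi)$ via Theorem~\ref{thm:polar_change} and the beta-integral substitution, note $\Omega^\polar$ is the $L_q$ ball, and then identify the correspondence by computing $\sin'_\Omega\theta=\cos^{2/q}\varphi$ and $\cos'_\Omega\theta=-\sin^{2/q}\varphi$ via Theorem~\ref{thm:derivatives}, which is exactly the paper's derivation of $\varphi_q(\theta^\polar)=\varphi_p(\theta)$. The only cosmetic difference is that you attribute bijectivity of $\theta\mapsto\theta^\polar$ to Theorem~\ref{thm:derivatives}, whereas it really comes from strict convexity and $C^1$ smoothness of $\Omega$ and $\Omega^\polar$, as noted in Section~\ref{sec:ct}; this does not affect the argument.
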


\begin{figure}[ht]
	\centering
	\begin{subfigure}{0.45\textwidth}
		\centering
		\includegraphics[width=\textwidth]{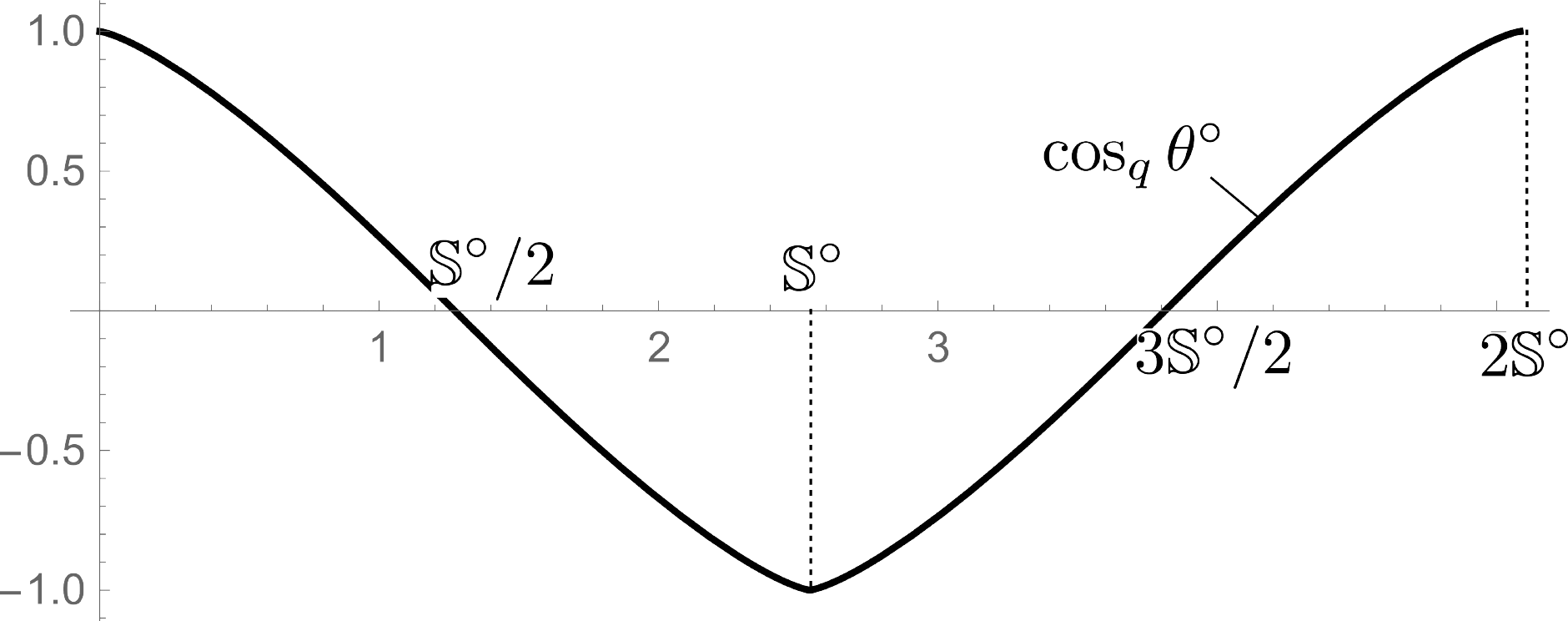}
	\end{subfigure}
	\begin{subfigure}{0.45\textwidth}
		\centering
		\includegraphics[width=\textwidth]{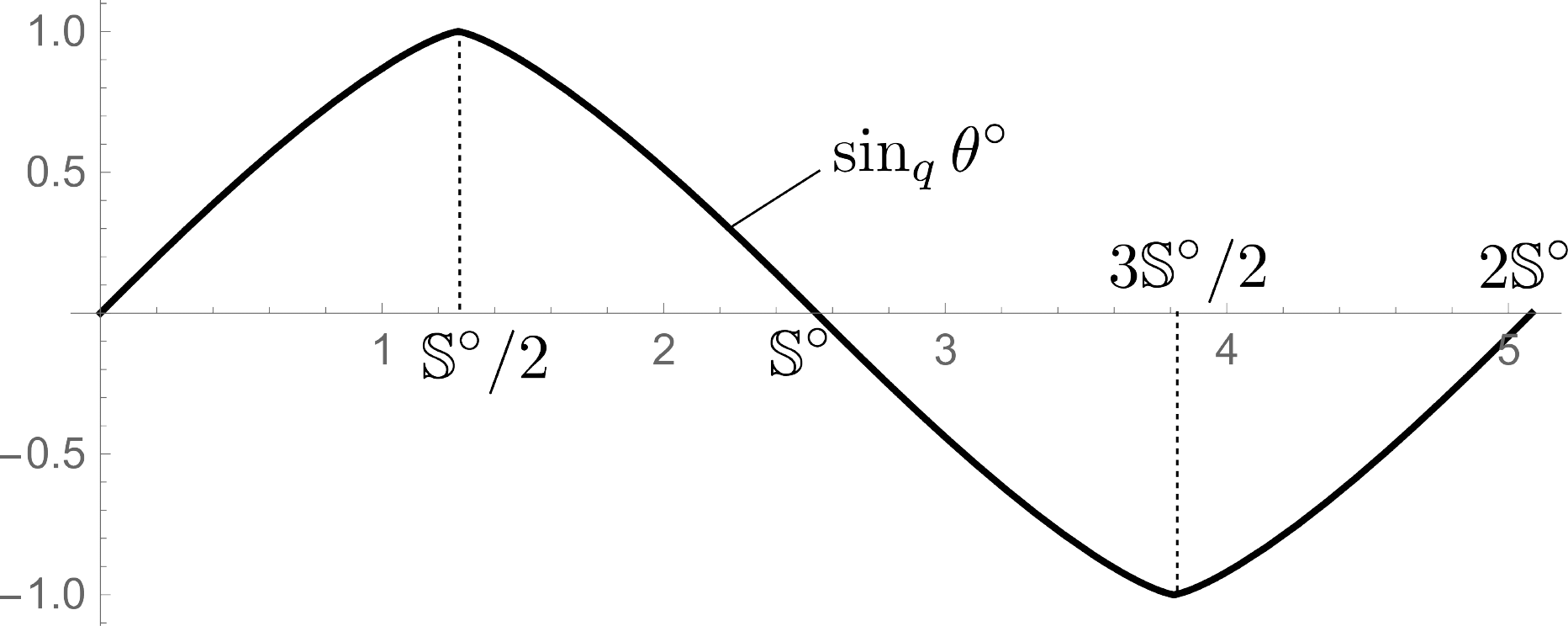}
	\end{subfigure}
	\caption{Graphs of $\cos_{\Omega^\polar}$ and $\sin_{\Omega^\polar}$ for $q=4/3$ (i.e. $p=4$).}
	\label{fig:cos_sin_lq}
\end{figure}

\begin{figure}[ht]
	\centering
	\includegraphics[width=0.4\textwidth]{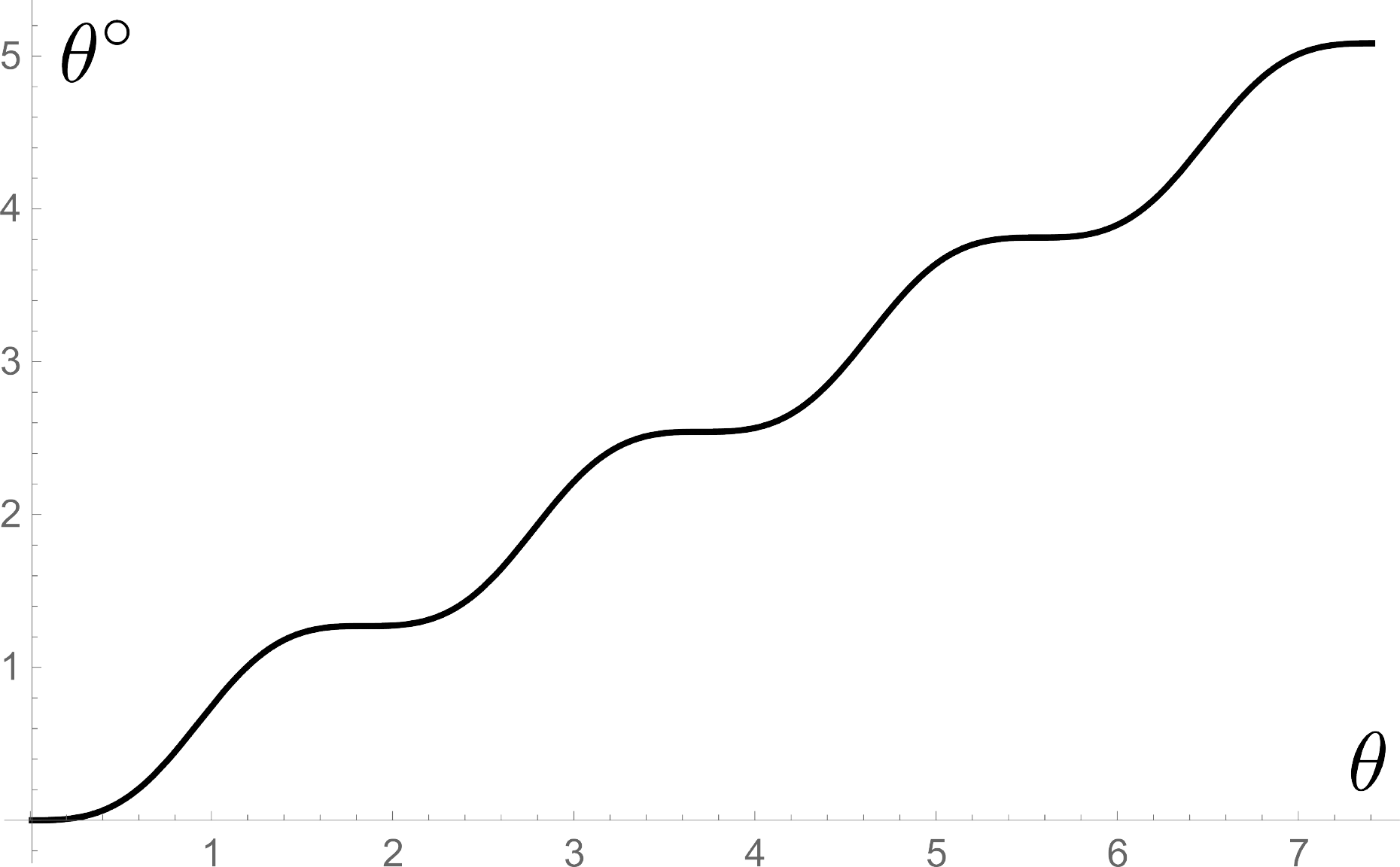}
	\caption{Graph of $\theta^\polar(\theta)$ for $p=4$.}
	\label{fig:theta_theta_polar}
\end{figure}

\begin{remark}
	Using Theorems~\ref{thm:cos_sin_lp} and~\ref{thm:derivatives} we obtain 
	\[
		\frac{d}{d\theta} \cos_p\theta = -\sin_q\theta^\polar = -|\sin\varphi|^{2/q}\sgn\sin\varphi = -|\sin_p\theta|^{p-1}\sgn\sin_p\theta
	\]
	\[
		\frac{d}{d\theta} \sin_p\theta = \cos_q\theta^\polar = |\cos\varphi|^{2/q}\sgn\sin\varphi = |\cos_p\theta|^{p-1}\sgn\cos_p\theta
	\]
	Therefore, $\cos_p$ and $\sin_p$ are solutions to the following ODE
	\begin{equation}
		\frac{d}{d\theta}x = -|y|^{p-1}\sgn y,\quad \frac{d}{d\theta}y = |x|^{p-1}\sgn x,\quad x(0)=1,\quad y(0)=0.
	\label{eq:Shelupskiy}
	\end{equation}
	Solutions to this system are known as Shelupsky’s generalized trigonometric functions \cite{Shelupsky}. Shelupsky's functions form 1-parametric family (determined by the parameter $p$) of pair of functions $(x(\theta),y(\theta))$ satisfying~\eqref{eq:Shelupskiy}. The convex triginometry functions introduced in~\cite{CT1} form infinite-dimensional family, which is determined by the set $\Omega$. So, we have proved that Shelupsky's functions are the special case of convex trigonometry functions when $\Omega$ is the unit $L_p$-ball. Hence, Shelupsky's functions share all properties of convex trigonometry. In particular, the correspondence $\theta\xleftrightarrow{\Omega\,}\theta^\polar$ given by~\eqref{eq:theta_lp} and~\eqref{eq:theta_lq} shows very a convenient connection between pairs ($\cos_p$, $\sin_p$) and ($\cos_q$,$\sin_q$) of Shelupsky's functions for $pq=p+q$ by Theorem~\ref{thm:derivatives}.

	Shelupsky's functions are widely used for studying $p$-Laplacian eigenvalues and eigenfunctions. A nice introduction and intergal representation for the inverse functions can be found in \cite{WeiLiuElgindi}. 
\end{remark}

Now, since we have obtained formulae for $\cos_\Omega$, $\sin_\Omega$, $\cos_{\Omega^\polar}$, and $\sin_{\Omega^\polar}$, we are ready to formulate the result on extremals in $L_p$ case.

\begin{proposition}
\label{prop:lp_case}
	Suppose that $U=\{(\dot x,\dot y)\in\R^n\times\R^{n*}:\sum_{i=1}^n(|\dot x_i|^p+|\dot y_i|^p)\le 1\}$ for $1<p<\infty$. Then for any extremal $(x(t),y(t),z(t))$ in problem~\eqref{eq:main_problem} on $\Hb{2n+1}$, there exists constants $\gamma=0,\pm1$ and $A=(A_1,\ldots,A_n)\in\R^n_+$ not vanishing at the same time such that for $\alpha=(\sum_j A_j^q)^{1/q}$ (where $pq=p+q$), we have
	\begin{enumerate}
		\item If $\gamma\ne 0$, then for each $i$ with $A_i>0$, there exists a constant $\theta_{i0}^\polar\in\R$ such that
		\[
			x_i=\gamma A_i\left(
				\sin_q\theta_i^\polar - \sin_q\theta_{i0}^\polar
			\right)
			\qquad\text{and}\qquad
			y_i=\gamma A_i\left(
				\cos_q\theta_{i0}^\polar - \cos_q\theta_i^\polar
			\right)
		\]
		where $\theta_i^\polar=\theta_{i0}^\polar + \gamma A_i^{q-2}\alpha^{-q/p}t$; for each $i$ with $A_i=0$, we have $x_i\equiv y_i\equiv 0$; and
		\[
			2z=\gamma\alpha t +
					\sum_{i=1}^n A_i^2\Big(
						\sin_q\theta_{i0}^\polar\cos_q\theta_i^\polar - \cos_q\theta_{i0}^\polar\sin_q\theta_i^\polar
					\Big).
		\]
		\item If $\gamma=0$, then for each $i$ with $A_i>0$, we have
		\[
			x_i = A_i^{q-1}\alpha^{q/p} u_i t
			\quad\mbox{and}\quad
			y_i = A_i^{q-1}\alpha^{q/p} v_i t		
		\]
		where $(u_i,v_i)\in\partial\Omega$ is a fixed point; for each $i$ with $A_i=0$, we have $x_i\equiv y_i\equiv 0$; and $z\equiv 0$.
	\end{enumerate}
	Moreover, if a trajectory $(x(t),y(t),z(t))$ has one of the described forms, then it is an extremal in problem~\eqref{eq:main_problem} on $\Hb{2n+1}$.
\end{proposition}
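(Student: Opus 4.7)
The plan is to apply Theorem~\ref{thm:main} with the explicit $L_p$ data and then simplify using the convex-trigonometric formulae supplied by Theorem~\ref{thm:cos_sin_lp}. First I would verify the main assumption with $\Omega_i=\{(u,v):|u|^p+|v|^p\le 1\}$ and $\mu(\lambda)=(\sum_i\lambda_i^p)^{1/p}$: the function $\mu$ is continuous, positively homogeneous, convex (in fact strictly convex for $1<p<\infty$), strictly monotone in each coordinate on $\R^n_+$, and strictly positive off the origin, and the defining inequality of $U$ is precisely $\mu(\mu_{\Omega_1}(\dot x_1,\dot y_1),\ldots,\mu_{\Omega_n}(\dot x_n,\dot y_n))\le 1$. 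Each $\Omega_i$ is also strictly convex with $C^1$ boundary.

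Next I would compute $s_\Xi(A)$ and the associated multipliers $\lambda$. Since $\Xi$ is the intersection of the unit $\ell^p$-ball with $\R^n_+$ and $A\in\R^n_+$, Hölder's inequality gives $s_\Xi(A)=\|A\|_q=\alpha$, with unique maximizer $\lambda_i=A_i^{q-1}\alpha^{1-q}$. Using the identity $q-1=q/p$ (equivalent to $pq=p+q$) one rewrites $\alpha^{1-q}=\alpha^{-q/p}$, so $\lambda_i=A_i^{q-1}\alpha^{-q/p}$. Corollary~\ref{cor:stricly_convex} then guarantees that $\lambda$ is the unique element of $\partial s_\Xi(A)$ and is constant in time, collapsing the integrals $\int_0^t\lambda_i\,d\tau$ to $\lambda_i t$. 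In particular $(\gamma/A_i)\int_0^t\lambda_i\,d\tau=\gamma A_i^{q-2}\alpha^{-q/p}t$, which is the claimed angular speed in case $\gamma\ne 0$.

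Substituting Theorem~\ref{thm:cos_sin_lp} into Theorem~\ref{thm:main}(1) then yields the first item of the proposition: $\cos_{\Omega_i^\polar}$ and $\sin_{\Omega_i^\polar}$ are replaced by $\cos_q$ and $\sin_q$, the angular speed is as above, and the coefficient $\gamma s_\Xi(A)t$ in the $z$ formula becomes $\gamma\alpha t$. For the second item, strict convexity of $\mu$ together with Corollary~\ref{cor:stricly_convex} forces $x_i$ and $y_i$ (for $A_i>0$) to be linear; strict monotonicity of $\mu$ and Corollary~\ref{cor:mu_strictly_monotone} force $\lambda_i\equiv 0$ and hence $x_i\equiv y_i\equiv 0$ whenever $A_i=0$. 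Finally, for any linear trajectory $(x_i,y_i)=(a_it,b_it)$ one has $x_i\dot y_i-\dot x_iy_i\equiv 0$, which together with $z(0)=0$ gives $z\equiv 0$.

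The converse direction — that every trajectory of the listed form is an extremal — is inherited verbatim from the final sentence of Theorem~\ref{thm:main}. The main ``difficulty'' is purely bookkeeping: correctly tracking the exponents via the relation $pq=p+q$ (several equivalent expressions such as $q-1$, $q/p$, $1/(p-1)$ coincide), and matching the convex-trigonometric objects in Theorem~\ref{thm:cos_sin_lp} to the abstract formulae in Theorem~\ref{thm:main}. No fundamentally new computation is required beyond specialization.
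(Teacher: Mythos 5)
Your proposal is correct and follows essentially the same route as the paper: verify the Main Assumption with the $L_p$-balls $\Omega_i$ and $\mu=\|\cdot\|_p$, compute $s_\Xi(A)=\|A\|_q=\alpha$ with the unique (constant) multiplier $\lambda_i=A_i^{q-1}\alpha^{-q/p}$ (the paper gets this via $\Xi^\polar$ and $\mu_{\Xi^\polar}$ rather than directly by H\"older, a cosmetic difference), and then specialize Theorem~\ref{thm:main} using Theorem~\ref{thm:cos_sin_lp} together with Corollaries~\ref{cor:stricly_convex} and~\ref{cor:mu_strictly_monotone} for the cases $\gamma=0$ and $A_i=0$. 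Your exponent bookkeeping in fact matches the statement of the proposition ($\alpha^{1-q}=\alpha^{-q/p}$), so no gap.
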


\begin{proof}

First, let us compute $\lambda=(\lambda_1,\ldots,\lambda_n)\in\partial s_\Xi(A)$. Since $\Xi=\{\lambda:\lambda_i\ge 0\mbox{ and }\sum_i\lambda_i^p\le 1\}$, we have $\Xi^\polar=\{A:\sum_i\max\{0,A_i\}^q\le 1\}$. Hence 
\[
	s_{\Xi}(A) = \mu_{\Xi^\polar} (A) =  \left(\sum_i\max\{0,A_i\}^q\right)^{1/q}.
\]
Therefore, if $A\in\R^n_+$, then $s_\Xi(A)=\alpha$, $\partial s_{\Xi}(A) = \{s'_\Xi(A)\}$. Hence, if $A_i>0$, then $\lambda_i=\partial s_\Xi(A)/\partial A_i = \alpha^{-1/p}A_i^{q-1}$, and if $A_i=0$ then $\lambda_i=0$ (see Corollary~\ref{cor:mu_strictly_monotone}). So, in both cases, $\lambda_i$ is constant (see Corollary~\ref{cor:stricly_convex}). 

If $\gamma\ne 0$, then $\theta_i^\polar = \theta_{i0}^\polar + \gamma\alpha^{-1/p}A_i^{q-2}t$ and item 1 follows from Theorem~\ref{thm:main}. If $\gamma=0$ and $A_i>0$ for some $i$, then $u_i=\cos_\Omega\theta_i$ and $v_i=\sin_\Omega\theta_i$ are constants, since $\Omega$ is strictly convex (see Corollary~\ref{cor:stricly_convex}). If $\gamma=0$ and $A_i=0$ for some $i$, then $x_i\equiv y_i\equiv0$, since $\lambda_i\equiv0$. Hence, if $\gamma=0$, then $\dot z\equiv 0$.
\end{proof}

\section{Convex hull and direct product of compact convex sets}
\label{sec:convex_hull_and_direct_product}

In this section, we consider the following two cases. First, if $U$ is a convex hull of sets $\Omega_i$ lying on the planes $O\dot x_i\dot y_i$, then it satisfies the main assumption with $\mu(\lambda)=\sum_{i=1}^n\lambda_i$. This case includes $L_1$--case $U=\{(\dot x,\dot y):\sum_{i=1}^n(|\dot x_i|+|\dot y_i|)\le 1\}$, which appears if all sets $\Omega_i$ coincide with the set $\Omega=\{(u,v)\subset\R^2:|u|+|v|\le 1\}$. For this particular set $\Omega$ functions $\cos_\Omega$ and $\sin_\Omega$ were computed in~\cite[Example 4]{CT1}. They are both periodic functions with period $4$, and (see Fig.~\ref{fig:cos_sin_l1})
\[
	\cos_\Omega\theta = |\theta-2| - 1\ \text{if}\ \theta\in[0;4]
	\quad\mbox{and}\quad
	\sin_\Omega\theta = \cos_\Omega(\theta-1)
\]

\begin{figure}[ht]
	\centering
	\begin{subfigure}{0.3\textwidth}
		\centering
		\includegraphics[width=\textwidth]{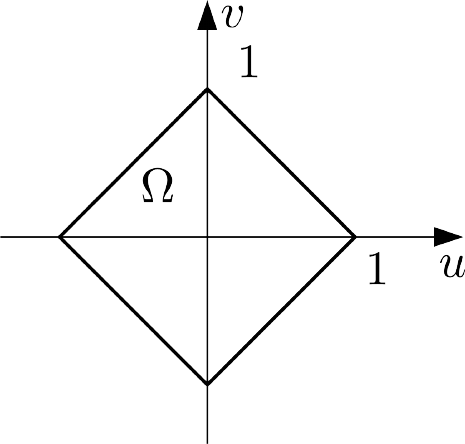}
	\end{subfigure}
	\ \ \ 
	\begin{subfigure}{0.3\textwidth}
		\centering
		\includegraphics[width=\textwidth]{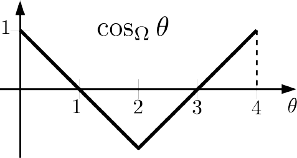}
	\end{subfigure}
	\ \ \ 
	\begin{subfigure}{0.3\textwidth}
		\centering
		\includegraphics[width=\textwidth]{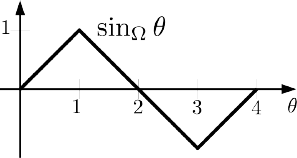}
	\end{subfigure}
	\caption{Graphs of $\cos_\Omega$ and $\sin_\Omega$ for $\Omega=\{u,v:|u|+|v|\le 1\}$.}
	\label{fig:cos_sin_l1}
\end{figure}

Second, if $U$ is a direct product of sets $\Omega_i$ lying on the planes $O\dot x_i\dot y_i$, then it satisfies the main assumption with $\mu(\lambda)=\max_i\lambda_i$. This case includes $L_\infty$--case $U=\{(\dot x,\dot y):\max_i\{|\dot x_i|,|\dot y_i|\}\le 1\}$, which appears if all sets $\Omega_i$ coincide with the set $\Omega^\polar=\{(u,v)\subset\R^2:\max\{|u|,|v|\}\le 1\}$. For this particular set $\Omega^\polar$ functions $\cos_{\Omega^\polar}$ and $\sin_{\Omega^\polar}$ were computed in~\cite[Example 4]{CT1}. They both have period 8, and  (see Fig.~\ref{fig:cos_sin_linfty})
\[
	\cos_{\Omega^\polar}\theta^\polar = \begin{cases}
		\frac12|\theta^\polar-3| + \frac12|\theta^\polar-5|-2,&\mbox{if }\theta^\polar\in[1;7];\\
		1,&\mbox{if }\theta^\polar\in[0;1]\cup[7;8];
	\end{cases}
	\quad\mbox{and}\quad
	\sin_{\Omega^\polar}\theta^\polar = \cos_{\Omega^\polar}(\theta^\polar-2).
\]

\begin{figure}[ht]
	\centering
	\begin{subfigure}{0.3\textwidth}
		\centering
		\includegraphics[width=\textwidth]{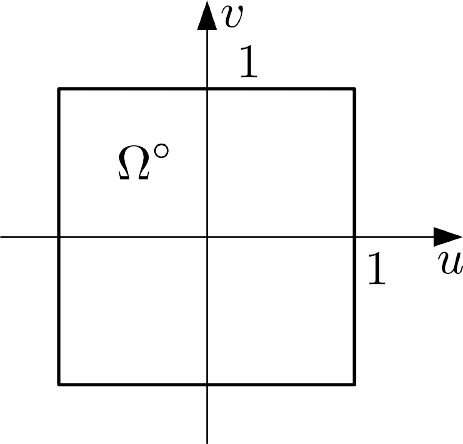}
	\end{subfigure}
	\ \ \ 
	\begin{subfigure}{0.3\textwidth}
		\centering
		\includegraphics[width=\textwidth]{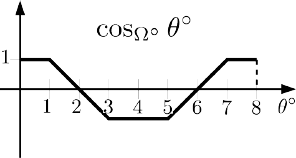}
	\end{subfigure}
	\ \ \ 
	\begin{subfigure}{0.3\textwidth}
		\centering
		\includegraphics[width=\textwidth]{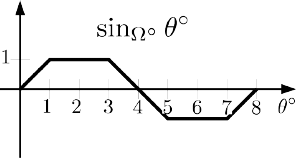}
	\end{subfigure}
	\caption{Graphs of $\cos_{\Omega^\polar}$ and $\sin_{\Omega^\polar}$ for $\Omega^\polar=\{u,v:\max\{|u|,|v|\}\le 1\}$.}
	\label{fig:cos_sin_linfty}
\end{figure}

\begin{proposition}
\label{prop:conv_case}
	Suppose that $U$ is a convex hull of compact convex sets $\Omega_i$ lying on the planes $O\dot x_i\dot y_i$ containing the origin in their (2--dimensional relative) interiors. Then in Theorem~\ref{thm:main}, we have $s_\Xi(A)=\max_i A_i$ and $\lambda(t)\in\R^n_+$ is an arbitrary measurable function such that (1) $\sum_i\lambda_i(t)=1$; and (2) if $A_i<s_\Xi(A)$ for some $i$, then $\lambda_i \equiv0$. Particularly, $x_i\equiv y_i\equiv 0$ if $A_i=0$.
\end{proposition}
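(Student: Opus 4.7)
The plan is to specialize Theorem~\ref{thm:main} to the convex-hull case by identifying $\mu$, computing $s_\Xi$ and $\partial s_\Xi$ explicitly, and then checking the vanishing statement separately in the two branches of Theorem~\ref{thm:main}.

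First I would show that $U=\mathrm{conv}(\bigcup_i\Omega_i)$ fits the main assumption with $\mu(\lambda)=\sum_{i=1}^n\lambda_i$. The direction $U\subset\{\sum_i\mu_{\Omega_i}(\dot x_i,\dot y_i)\le 1\}$ follows because any element of $U$ has the form $\sum_i\lambda_i\omega_i$ with $\lambda_i\ge 0$, $\sum\lambda_i\le 1$, $\omega_i\in\Omega_i$; since $\Omega_i$ lies in the $i$-th coordinate plane, the $i$-th block of this sum is $\lambda_i\omega_i$, whose $\mu_{\Omega_i}$-value is at most $\lambda_i$, and summing yields the inequality. The reverse inclusion comes from choosing $\lambda_i=\mu_{\Omega_i}(\dot x_i,\dot y_i)$. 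Consequently $\Xi$ is the standard simplex $\{\lambda\in\R_+^n:\sum_i\lambda_i\le 1\}$, and all hypotheses of Theorem~\ref{thm:main} are in force.

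Next I would exploit the linearity of $\mu$. Since $A\in\R_+^n$ by Remark~\ref{rm:subdiff_is_argmax}, the maximum of the linear functional $\langle A,\cdot\rangle$ over the simplex $\Xi$ is attained at any vertex $e_{i^*}$ with $A_{i^*}=\max_j A_j$, giving $s_\Xi(A)=\max_i A_i$. The argmax face, and hence $\partial s_\Xi(A)$ by Remark~\ref{rm:subdiff_is_argmax}, is the convex hull of those vertices, namely $\{\lambda\in\R_+^n:\sum_i\lambda_i=1,\ \lambda_i=0\text{ whenever }A_i<\max_j A_j\}$. Inserting this description into Theorem~\ref{thm:main} yields the announced form of $\lambda(t)$.

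Finally, to obtain $x_i\equiv y_i\equiv 0$ when $A_i=0$: in the branch $\gamma\ne 0$ this is already part of the conclusion of Theorem~\ref{thm:main}. In the branch $\gamma=0$ the joint non-vanishing of $\gamma$ and $A$ forces $\max_j A_j>0$, so $A_i=0<\max_j A_j$ places index $i$ in the zero part of the face above; hence $\lambda_i\equiv 0$ and the integral formulae in the $\gamma=0$ case of Theorem~\ref{thm:main} give $x_i\equiv y_i\equiv 0$. I do not foresee a serious obstacle; the proposition is essentially the explicit combinatorics of maximizing a linear functional on a simplex, and the only delicate point is this auxiliary argument to extend the vanishing conclusion to the $\gamma=0$ branch.
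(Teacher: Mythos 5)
Your proof is correct and takes essentially the same route as the paper: identify $\Xi$ as the standard simplex, compute $s_\Xi(A)=\max_i A_i$, and describe $\partial s_\Xi(A)$ as the face spanned by the vertices with maximal $A_i$ (the paper gets this face by citing the Dubovitskiy--Milyutin formula for the subdifferential of a maximum, while you read it off directly from Remark~\ref{rm:subdiff_is_argmax}; the two are the same computation). Your additional checks --- that the convex hull indeed satisfies the main assumption with $\mu(\lambda)=\sum_i\lambda_i$, and the $\gamma=0$ argument giving $x_i\equiv y_i\equiv 0$ when $A_i=0$ --- are details the paper states without proof or leaves implicit, and they are fine.
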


\begin{proof}
	Since $\mu(\lambda)=\sum_i\lambda_i$, we have $\Xi=\{\lambda:\sum_i\lambda_i\le 1,\ \forall i\,\lambda_i\ge 0\}$. Therefore, $\Xi^\polar=\{A:\max_iA_i\le 1\}$ and $s_\Xi(A)=\mu_{\Xi^\polar}(A)=\max_iA_i$. Hence, using Dubovitskiy-Milyutin formula for the subdifferential of maximum (see~\cite[Section~1.5]{TikhomirovMagaril}), we have
	\[
		\partial s_\Xi(A) = \left\{
			\lambda:\sum_i\lambda_i=1,\ \forall i\,\lambda_i\ge0\mbox{ and }\big(A_i<s_\Xi(A)\Rightarrow\lambda_i=0\big)
		\right\}.
	\]
\end{proof}

\begin{proposition}
\label{prop:prod_case}
	Suppose that $U$ is a direct product of compact convex sets $\Omega_i$ lying on the planes $O\dot x_i\dot y_i$ containing the origin in their (2--dimensional relative) interiors. Then in Theorem~\ref{thm:main}, we have (1) $s_\Xi(A)=\sum_i A_i$ for $A\in\R_+^n$ and (2) $\lambda(t)\in\R^n_+$ is an arbitrary measurable function such that $\lambda_i\equiv 1$ for each index $i$ with $A_i>0$. In particular, if $\gamma\ne 0$, then $\theta_i^0(t)$ is linear in $t$ for each $i$ with $A_i>0$.
\end{proposition}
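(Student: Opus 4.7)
The plan is to identify the function $\mu$ corresponding to a direct product, then read off $\Xi$, its support function, and its subdifferential. First, since $U=\Omega_1\times\dots\times\Omega_n$, a velocity $(\dot x,\dot y)$ lies in $U$ iff $\mu_{\Omega_i}(\dot x_i,\dot y_i)\le 1$ for every $i$. Thus the choice $\mu(\lambda_1,\ldots,\lambda_n)=\max_i\lambda_i$ realizes the representation~\eqref{eq:U_spherical}, and it is immediate that $\mu$ is continuous, convex, positively homogeneous, monotone non-decreasing in each argument and strictly positive off the origin; hence $U$ satisfies the main assumption.

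Next, I would compute $\Xi=\{\lambda\in\R^n_+:\max_i\lambda_i\le 1\}=[0,1]^n$. For $A\in\R^n_+$ the maximum in $s_\Xi(A)=\max_{\lambda\in[0,1]^n}\sum_i A_i\lambda_i$ separates into $n$ one-dimensional problems, each yielding $\lambda_i=1$, so $s_\Xi(A)=\sum_i A_i$. This proves (1).

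For (2), I use Remark~\ref{rm:subdiff_is_argmax}: $\partial s_\Xi(A)=\mathop{\arg\max}_{\lambda\in[0,1]^n}\sum_i A_i\lambda_i$. The coordinates decouple, so $\lambda\in\partial s_\Xi(A)$ iff $\lambda_i=1$ whenever $A_i>0$ and $\lambda_i\in[0,1]$ is arbitrary whenever $A_i=0$. Combined with the constraint in Theorem~\ref{thm:main} that $\lambda_i\equiv 0$ whenever $A_i=0$ and $\gamma\ne 0$, this gives exactly the freedom asserted in the proposition. Finally, for $\gamma\ne 0$ and any $i$ with $A_i>0$, the formula $\theta_i^\polar(t)=\theta_{i0}^\polar+\frac{\gamma}{A_i}\int_0^t\lambda_i(\tau)\,d\tau$ from Theorem~\ref{thm:main} together with $\lambda_i\equiv 1$ yields $\theta_i^\polar(t)=\theta_{i0}^\polar+\gamma t/A_i$, which is linear in $t$. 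There is no real obstacle here; the only thing to double-check is that the rigidity $\lambda_i\equiv 1$ (when $A_i>0$) is consistent with the measurability and admissibility requirements in Theorem~\ref{thm:main}, which it trivially is since constants are measurable and $1\in[0,1]$.
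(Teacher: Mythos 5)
Your proposal is correct and follows essentially the same route as the paper: identify $\mu(\lambda)=\max_i\lambda_i$, observe $\Xi=[0,1]^n$, and compute $\partial s_\Xi(A)$ as the argmax of $\sum_i A_i\lambda_i$ over $\Xi$, which forces $\lambda_i=1$ exactly when $A_i>0$. The only cosmetic difference is that you obtain $s_\Xi(A)=\sum_i A_i$ by direct coordinate-wise maximization, whereas the paper passes through the polar set $\Xi^\polar$ and its Minkowski functional; both are the same computation, and your added verification of the linearity of $\theta_i^\polar$ is a harmless immediate consequence of Theorem~\ref{thm:main}.
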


\begin{proof}
	Since $\mu(\lambda)=\max_i\lambda_i$, we have $\Xi=\{\lambda:\forall i\,0\le\lambda_i\le 1\}$. Therefore, $\Xi^\polar=\{A:\sum_{i:A_i\ge0}A_i\le 1\}$ and $s_\Xi(A)=\mu_{\Xi^\polar}(A)=\sum_i\max\{0,A_i\}$. Hence, for $A\in\R^n_+$, we have
	\[
		\partial s_\Xi(A) = \argmax_{\lambda\in\Xi} \sum_iA_i\lambda_i=
		\left\{
			\lambda\in\Xi:\forall i\, (A_i>0\Rightarrow \lambda_i=1)
		\right\}.
	\]
\end{proof}

\begin{remark}
\label{rm:arbitrary_symplectic_planes}
	Moreover, these results can be applied to convex hulls and sums of $n$ convex compact 2--dim sets lying in arbitrary planes $P_i$ if $\R^n\times\R^{n*}=\sum_{i=1}^nP_i$ and these planes are pairwise skew-orthogonal w.r.t.\ canonical symplectic structure on $\R^n\times \R^{n*}$.
\end{remark}

\section{Arbitrary sub--Riemannian case}
\label{sec:subriemannain}

In this section, we consider the case when $U$ is an arbitrary ellipsoid $U=\{(\dot x,\dot y):g(\dot x,\dot y)\le 1\}\subset\R^n\times\R^{n*}$ where $g$ is a symmetric positive definite bilinear form on $\R^n\times\R^{n*}$. We claim, that in this case, the explicit formulae for extremals can be obtained by Theorem~\ref{thm:main}. The following theorem was obtained in~\cite{BiggsNagy} using authomormisms of $\Hb{2n+1}$. We prefer to give another proof by almost complex structures.

\begin{theorem}[{\cite[Theorem 3]{BiggsNagy}}]
\label{thm:symplectic_change}
	There exists a linear symplectic change of variable $(x,y)=C(\tilde x,\tilde y)$ (i.e.\ with $C\in \mathrm{Sp}(\R^n\times\R^{n*})$) such that control system~\eqref{eq:main_problem} takes the form
	\[
		\sum_i\tfrac{1}{a_i^2}(\dot{\tilde x}_i^2 + \dot{\tilde y}_i^2)\le 1;\qquad
		\dot z= \frac12\sum_i(\tilde x_i\dot{\tilde y}_i-\tilde y_i\dot{\tilde x}_i).
	\]
	where $a_i>0$ are some constants.
\end{theorem}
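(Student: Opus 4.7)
The plan is to reduce the theorem to a version of Williamson's theorem---that a positive definite quadratic form on a symplectic vector space admits a symplectic basis diagonalizing it---and then prove that lemma via an almost complex structure adapted to $g$ and $\omega$. Set $V = \R^n \times \R^{n*}$ with the canonical symplectic form $\omega((x,y),(x',y')) = \langle x, y'\rangle - \langle x', y\rangle$, so the Heisenberg constraint is $\dot z = \tfrac12 \omega((x,y),(\dot x,\dot y))$. For any $C \in \mathrm{Sp}(V)$, invariance of $\omega$ under $C$ immediately yields $\omega((x,y),(\dot x,\dot y)) = \omega((\tilde x,\tilde y),(\dot{\tilde x},\dot{\tilde y}))$ after $(x,y) = C(\tilde x, \tilde y)$, so the form of the second equation is preserved. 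Hence it suffices to exhibit $C \in \mathrm{Sp}(V)$ with $g(C\cdot, C\cdot) = \sum_i a_i^{-2}(\tilde x_i^2 + \tilde y_i^2)$ for some $a_i > 0$.

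To produce $C$, I would define $K \in \mathrm{End}(V)$ by the identity $g(Ku, v) = \omega(u, v)$, which is unambiguous since $g$ is non-degenerate, and invertible because $\omega$ is. Antisymmetry of $\omega$ gives $g(u, Kv) = -g(Ku, v)$, so $K$ is $g$-skew-adjoint; consequently $-K^2$ is $g$-self-adjoint and positive definite, since $g(-K^2 u, u) = g(Ku, Ku)$. The spectral theorem produces a $g$-orthogonal decomposition $V = \bigoplus_\mu E_\mu$ with $-K^2|_{E_\mu} = \mu\cdot\id$, $\mu > 0$. Because $K$ commutes with $K^2$, every $E_\mu$ is $K$-invariant, and on $E_\mu$ the operator $J_\mu := K/\sqrt{\mu}$ is a complex structure ($J_\mu^2 = -\id$) that is $g$-orthogonal (from $J_\mu^{*g} = -J_\mu$ one gets $g(J_\mu u, J_\mu v) = g(u,v)$). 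Standard iteration then produces a $g$-orthonormal basis $(u^\mu_i, J_\mu u^\mu_i)_i$ on each eigenspace.

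Next I would verify that this collective basis is $\omega$-orthogonal across distinct eigenspaces: for $u \in E_\mu$ and $v \in E_{\mu'}$ with $\mu \ne \mu'$, $K$-invariance gives $Ku \in E_\mu \perp_g E_{\mu'}$, whence $\omega(u,v) = g(Ku,v) = 0$. A direct calculation inside a single $E_\mu$ yields $\omega(u^\mu_i, J_\mu u^\mu_i) = g(Ku^\mu_i, J_\mu u^\mu_i) = \sqrt{\mu}$, while all other symplectic pairings vanish by $g$-orthogonality and $J_\mu$-compatibility of the chosen basis. Finally, rescale $\tilde e^\mu_i := \mu^{-1/4} u^\mu_i$ and $\tilde f^\mu_i := \mu^{-1/4} J_\mu u^\mu_i$; this gives a symplectic basis of $V$ in which $g(\tilde e^\mu_i, \tilde e^\mu_i) = g(\tilde f^\mu_i, \tilde f^\mu_i) = \mu^{-1/2}$. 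Re-indexing the $n$ resulting pairs as $i = 1,\dots,n$ with eigenvalues $\mu_i$ and setting $a_i := \mu_i^{1/4}$ delivers the required normal form.

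The main obstacle is establishing the cross-eigenspace symplectic orthogonality; it rests crucially on the observation that $K$ preserves the eigenspaces of $-K^2$, equivalently, that $K$ commutes with its own square. Everything else is careful bookkeeping: choosing the single rescaling factor $\mu^{-1/4}$ that simultaneously enforces $\omega(\tilde e_i, \tilde f_i) = 1$ and brings $g$ into the diagonal shape $\sum_i a_i^{-2}(\tilde x_i^2 + \tilde y_i^2)$ with $a_i = \mu_i^{1/4}$.
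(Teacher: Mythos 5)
Your proof is correct and follows essentially the same route as the paper: your operator $K$ defined by $g(Ku,v)=\omega(u,v)$ is the paper's $g^{-1}\omega$, your $-K^2$ is the paper's $g^{-1}p$ with $p=-\omega g^{-1}\omega$, and both arguments diagonalize this positive $g$-self-adjoint operator, use $J=K/\sqrt{\mu}$ as a $g$-compatible complex structure on each eigenspace, and rescale an adapted $g$-orthonormal basis (your $\mu^{-1/4}$, the paper's $a_i^{-1}$ with $a_i^2=\sqrt{\mu}$) to obtain a symplectic basis in which $g$ is diagonal. The only difference is presentational: you build the basis by direct real iteration and check cross-eigenspace $\omega$-orthogonality by hand, where the paper packages the same facts via the Hermitian form $H=g+\mathbf{i}\,g(J\cdot,\cdot)$.
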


\begin{proof}
Denote by $\omega$ the canonical skew-symmetric product (symplectic form) on $\R^n\times\R^{n*}$:
\[
	\omega((x^1,y^1),(x^2,y^2)) = \langle x^1,y^2\rangle - \langle x^2, y^1\rangle.
\]
Consider the following symmetric, positive definite form $p=\omega^Tg^{-1}\omega = -\omega g^{-1}\omega$. There exists an orthogonal (w.r.t.~$g$) decomposition $\R^n\times\R^{n*}=\bigoplus_{k=1}^m V_j$, $V_k\perp V_l$ for $k\ne l$, such that $g^{-1}pV_k=V_k$ and
\[
	\left(g^{-1}p\right)\big|_{V_k}=c_k^2\id_{V_k}\quad\text{for some}\quad c_k>0
\]
where $c_k^2$ are distinguish eigenvalue of $g^{-1}p$. Since $g^{-1}p$ commutes with $g^{-1}\omega$ and $c_k\ne c_l$ for $k\ne l$, we also have $g^{-1}\omega V_k=V_k$.

Consider an operator $J\in\mathrm{GL}(\R^n\times\R^{n*})$ such that $J|_{V_k}=c_k^{-1}(g^{-1}\omega)$. We claim that $J$ is an almost complex structure on $\R^n\times\R^{n*}$ compatible with $g$ (see~\cite[Section 1.2]{Huybrechts}). Indeed, $J$ is orthogonal (w.r.t.~$g$), since $J(V_k)=V_k$, and
\[
	(J^TgJ)|_{V_k} = c_k^{-1} (\omega^T(g^T)^{-1}gg^{-1}\omega)|_{V_k} = -c_k^{-2} (\omega g^{-1}\omega)|_{V_k} =
	c_k^{-2}(gg^{-1}p)|_{V_k} = g|_{V_k},
\]
\noindent and
\[
	J^2|_{V_k} = c_k^{-2}(g^{-1} \omega g^{-1}\omega)|_{V_k} = -c_k^{-2}(g^{-1}p)|_{V_k} = -\id_{V_k}.
\]
Hence, $H(a,b)=g(a,b)+\mathbf{i}\,g(Ja,b)\in\mathbb{C}$ for $a,b\in \R^n\times\R^{n*}$, is a hermitian inner product on~$\R^n\times\R^{n*}$ compatible with almost complex structure given by $J$. Subspaces $V_k$ are almost complex subspaces, since $JV_k=V_k$, and they are  orthogonal w.r.t.\ $H$, since $g[V_k,V_l]=0$ for $k\ne l$. Therefore, there exists an orthonormal w.r.t.~$g|_{V_k}$ basis on~$V_k$ that is also symplectic w.r.t.\ $gJ$. Remind that $g^{-1}\omega V_k=V_k$ and $g^{-1}\omega|_{V_k}=c_kJ$. Hence, collecting these bases for all $V_k$, $k=1,\ldots,m$, we obtain an orthonormal w.r.t.~$g$ basis $e_1,f_1,e_2,f_2,\ldots,e_n,f_n$ such that matrix of $\omega$ in this basis is block diagonal with $2\times2$ blocks on the diagonal of the form 
\[
	\begin{pmatrix}
		0 & -a_i^2\\
		a_i^2& 0
	\end{pmatrix}
\]
where $a_i>0$, $i=1,\ldots,n$. Let $C$ denote the change of variable on $\R^n\times\R^{n*}$ from the initial standard basis to the basis $a_1^{-1}e_1,\ldots,a_n^{-1}e_n,a_1^{-1}f_1,\ldots a_n^{-1}f_n$. Note that $C\in\mathrm{Sp}(\R^n\times \R^{n*})$, since it preserves $\omega$. In other words, if $(x,y)=C(\tilde x,\tilde y)$, then $\omega(\tilde x,\tilde y)=\omega(x,y)$. Moreover, $C^TgC=\mathrm{diag}(a_1^{-2},\ldots,a_n^{-2},a_1^{-2},\ldots,a_n^{-2})$. Therefore,
\[
	U=\left\{(\tilde x,\tilde y):\sum_i\frac{1}{a_i^2}\left(\dot{\tilde x}_i^2 + \dot{\tilde y}_i^2\right)\le 1\right\}.
\]

Let us now compute $\dot z$ in new coordinates. Obviously,
\[
	\dot z = \frac12\sum_i(x_i\dot y_i - \dot x_i y_i)= \frac12\omega((x,y),(\dot x,\dot y)) =
	\omega((\tilde x,\tilde y),(\dot{\tilde x},\dot{\tilde y}))=
	\frac12\sum_i(\tilde x_i\dot{\tilde y}_i - \dot{\tilde x}_i\tilde y_i).
\]
\end{proof}

So, any sub-Riemannain problem~\eqref{eq:main_problem} on $\Hb{2n+1}$ is equivalent to the following simplest one (by an appropriate symplectic change of variables given in the proof of Theorem~\ref{thm:symplectic_change}):
\begin{equation}
\label{eq:elliptic_control_system}
	\begin{array}{c}
		T\to\inf;\\
		\sum_i\frac{1}{a_i^2}(\dot x_i^2 + \dot y_i^2)\le 1;\qquad \dot z = \frac12\sum_i(x_i\dot y_i-\dot x_i y_i);\\
		\forall i\quad x_i(0)=y_i(0)=z(0)=0.
	\end{array}
\end{equation}

This problem satisfies our main assumption with $\mu(\lambda)=(\sum_ia_i^{-2}\lambda_i^2)^{1/2}$ and all $\Omega_i$ being unit discs.

\begin{proposition}
	For any extremal $(x(t),y(t),z(t))$ in problem~\eqref{eq:elliptic_control_system} on $\Hb{2n+1}$, there exists constants $\gamma=0,\pm1$ and $A=(A_1,\ldots,A_n)\in\R^n_+$ not vanishing at the same time and constants $\theta_{i0}\in\R$ for all $i$ with $A_i>0$ such that for $\alpha=(\sum_i a_i^2 A_i^2)^{1/2}$, we have
		\begin{enumerate}
			\item If $\gamma\ne 0$, then for each $i$ with $A_i>0$, we have
			\[
				x_i=\gamma A_i\left(
					\sin\theta_i - \sin\theta_{i0}
				\right)
				\quad\mbox{and}\quad
				y_i=\gamma A_i\left(
					\cos\theta_{i0} - 
					\cos\theta_i
				\right)
			\]
			where $\theta_i=\theta_{i0} + \gamma a_i^2t/\alpha$; for each $i$ with $A_i=0$, we have $x_i\equiv y_i\equiv 0$; and
			\[
				2z=\gamma\alpha t -	\sum_{i:A_i>0} A_i^2\sin\left(\gamma a_i^2t/\alpha\right).
			\]
			\item If $\gamma=0$, then for each $i$ with $A_i>0$, we have
			\[
				x_i = \frac{a_i^2A_i}{\alpha}t\cos\theta_{i0}
				\quad\mbox{and}\quad
				y_i = \frac{a_i^2A_i}{\alpha}t\sin\theta_{i0};
			\]
			for each $i$ with $A_i=0$, we have $x_i\equiv y_i\equiv 0$; and $z\equiv0$.
		\end{enumerate}
		Moreover, if a trajectory $(x(t),y(t),z(t))$ has one the described forms, then it is an extremal in problem~\eqref{eq:elliptic_control_system} on $\Hb{2n+1}$.
\end{proposition}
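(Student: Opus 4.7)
The plan is to verify that the sub-Riemannian problem~\eqref{eq:elliptic_control_system} satisfies the main assumption with $\mu(\lambda)=\bigl(\sum_i a_i^{-2}\lambda_i^2\bigr)^{1/2}$ and each $\Omega_i$ the Euclidean unit disc, then to apply Theorem~\ref{thm:main} directly and simplify. Indeed, for $\Omega_i$ the unit disc we have $\mu_{\Omega_i}(\dot x_i,\dot y_i)=(\dot x_i^2+\dot y_i^2)^{1/2}$, so
\[
 \mu\bigl(\mu_{\Omega_1}(\dot x_1,\dot y_1),\ldots,\mu_{\Omega_n}(\dot x_n,\dot y_n)\bigr)^2
 =\sum_i a_i^{-2}(\dot x_i^2+\dot y_i^2),
\]
which gives the admissible set $U$ of~\eqref{eq:elliptic_control_system}. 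The function $\mu$ is clearly continuous, convex, positively homogeneous, strictly positive away from $0$, and strictly monotone in each argument, so the main assumption holds.

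Next I would compute $s_\Xi$ and $\partial s_\Xi$. For $A\in\R^n_+$ the maximum of $\sum_i A_i\lambda_i$ on $\Xi=\{\lambda\ge0:\sum_i a_i^{-2}\lambda_i^2\le1\}$ is attained on the boundary ellipsoid, and a Lagrange multiplier computation gives the unique maximizer $\lambda_i = a_i^2 A_i/\alpha$ with $\alpha=(\sum_j a_j^2 A_j^2)^{1/2}$, whence $s_\Xi(A)=\alpha$. Because $\mu$ is strictly convex and strictly monotone, Corollaries~\ref{cor:stricly_convex} and~\ref{cor:mu_strictly_monotone} apply: $\lambda$ is a constant vector, $\lambda_i\equiv 0$ whenever $A_i=0$, and hence $x_i\equiv y_i\equiv 0$ for such $i$ in both branches $\gamma=0$ and $\gamma\ne 0$.

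For the unit disc the polar is the unit disc itself, the correspondence $\theta\xleftrightarrow{\Omega_i}\theta^\polar$ is the identity, and $\cos_{\Omega_i}=\cos_{\Omega_i^\polar}=\cos$, $\sin_{\Omega_i}=\sin_{\Omega_i^\polar}=\sin$. Writing $\theta_i=\theta_i^\polar$ and using $\lambda_i/A_i=a_i^2/\alpha$ in the formula $\theta_i^\polar=\theta_{i0}^\polar+\tfrac{\gamma}{A_i}\int_0^t\lambda_i\,d\tau$ of Theorem~\ref{thm:main}\,(1) gives the linear law $\theta_i=\theta_{i0}+\gamma a_i^2 t/\alpha$. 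Substituting into the expressions of Theorem~\ref{thm:main} immediately yields the claimed formulae for $x_i$ and $y_i$, and for $z$ one just applies the identity $\sin\theta_{i0}\cos\theta_i-\cos\theta_{i0}\sin\theta_i=-\sin(\theta_i-\theta_{i0})$ to obtain
\[
 2z=\gamma\alpha t-\sum_{i:A_i>0}A_i^2\sin(\gamma a_i^2 t/\alpha).
\]

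For the case $\gamma=0$ the angle $\theta_i(t)$ must satisfy $\theta_i\xleftrightarrow{\Omega_i}\theta_{i0}$, and since $\Omega_i$ is strictly convex this forces $\theta_i\equiv\theta_{i0}$; together with the constant value $\lambda_i=a_i^2 A_i/\alpha$ the integrals for $x_i,y_i$ in Theorem~\ref{thm:main}\,(2) produce the linear expressions of the statement. The relation $\dot z=\tfrac12\sum_i(x_i\dot y_i-\dot x_i y_i)$ then vanishes identically because each pair $(x_i,y_i)$ is a straight line through the origin, so $z\equiv 0$. The converse direction (that every trajectory of the listed form is an extremal) is inherited from the analogous statement in Theorem~\ref{thm:main}. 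There is no real obstacle here; the only non-trivial step is the Lagrange computation of $s_\Xi$ and $\lambda$, and everything else is routine substitution.
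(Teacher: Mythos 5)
Your proposal is correct and follows essentially the same route as the paper: verify the main assumption with unit discs and $\mu(\lambda)=(\sum_i a_i^{-2}\lambda_i^2)^{1/2}$, note that convex trigonometry reduces to classical $\cos,\sin$ with the identity correspondence $\theta=\theta^\polar$, compute $s_\Xi(A)=\alpha$ and $\lambda_i=a_i^2A_i/\alpha$, and substitute into Theorem~\ref{thm:main}. The only cosmetic difference is that you obtain $s_\Xi$ and the maximizer by a Lagrange computation while the paper reads them off from $\Xi^\polar$, which is an equivalent calculation.
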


\begin{proof}

	Proof is completely trivial since $\Omega_i=\Omega_i^\polar$ are unit euclidean discs. Hence, correspondence $\theta^\polar\xleftrightarrow{\Omega_i}\theta^\polar$ is equivalent to equality $\theta=\theta^\polar$, and $\sin_{\Omega_i}=\sin$ and $\cos_{\Omega_i}=\cos$. Therefore, applying Theorem~\ref{thm:main} needs only a computation of $s_\Xi(A)$ for $A\in\R^n_+$. Since $\Xi=\{\lambda\in\R^n_+:\sum_i a_i^{-2}\lambda_i^2\le 1\}$, then $\Xi^\polar=\{A:\sum_i a_i^2\max\{0,A_i\}^2\le x\}$, $\mu_{\Xi^\polar}=s_{\Xi}=(\sum_i a_i^2\max\{0,A_i\}^2)^{1/2}$, and $s_\Xi(A)=\sum_i a_i^2A_i^2$ for $A\in\R^n_+$. Hence $\lambda_i=\partial s_\Xi/\partial A_i = a_i^2A_i/\alpha$.

\end{proof}

The author would like to express his deep gratitude to Professor A.I.~Nazarov for interesting discussions and pointing on a relation to Shelupsky's functions.

\printbibliography

\end{document}